\documentclass[11pt]{article}

\usepackage{setspace}

\usepackage{amsmath, amsthm, amsfonts, amssymb, eufrak, makeidx, multicol}

\newtheorem{prop}{Proposition}[section]
\newtheorem{thm}[prop]{Theorem}
\newtheorem{lemma}[prop]{Lemma}
\newtheorem{cor}[prop]{Corollary}
\theoremstyle{definition} 
\newtheorem{defn}[prop]{Definition}
\newtheorem{example}[prop]{Example}

\newtheorem{question}[prop]{Question}
\newtheorem{rem}[prop]{Remark}

\DeclareMathSymbol{\subsetneq}{\mathord}{AMSb}{"26}

\newcommand{\mb}{\mathbb}

\newcommand{\p}{\mathfrak{p}}

\renewcommand{\a}{\mathfrak{a}}

\newcommand{\C}{\mb{C}}

\newcommand{\F}{\mb{F}}

\newcommand{\M}{\operatorname{M}}

\newcommand{\N}{\mb{N}}

\newcommand{\TA}{\operatorname{TA}}
\newcommand{\E}{\operatorname{E}}
\newcommand{\EA}{\operatorname{EA}}
\newcommand{\GA}{\operatorname{GA}}

\newcommand{\BA}{\operatorname{BA}}

\newcommand{\GL}{\operatorname{GL}}

\newcommand{\J}{\operatorname{J}}
\newcommand{\Ker}{\operatorname{Ker}}
\newcommand{\Char}{\operatorname{char}}

\newcommand{\ov}{\overline}

\setlength{\textheight}{23cm} \setlength{\textwidth}{15cm}
\setlength{\oddsidemargin}{+0.85cm}
\setlength{\topmargin}{-1cm}


\hyphenation{Nij-me-gen stel-ling woor-den toe-ge-past maxi-maal}


\title{ \bf Linearized polynomial maps \\ over finite fields}
\author{Joost Berson}

    \date{}

\begin{document}

\maketitle

\abstract{ \noindent We consider polynomial maps described by
so-called {\it (multivariate) linearized polynomials}. These
polynomials are defined using a fixed prime power, say $q$.
Linearized polynomials have no mixed terms. Considering invertible
polynomial maps without mixed terms over a characteristic zero
field, we will only obtain (up to a linear transformation of the
variables) triangular maps, which are the most basic examples of
polynomial automorphisms. However, over the finite field $\F_q$
automorphisms defined by linearized polynomials have (in general) an
entirely different structure. Namely, we will show that the
linearized polynomial maps over $\F_q$ are in one-to-one
correspondence with matrices having coefficients in a univariate
polynomial ring over $\F_q$. Furthermore, composition of polynomial
maps translates to matrix multiplication, implying that invertible
linearized polynomial maps correspond to invertible matrices.

This alternate description of the linearized polynomial automorphism
subgroup leads to the solution of many famous conjectures (most
notably, the Jacobian Conjecture) for this kind of polynomials and polynomial maps.  \\
}

\noindent \textbf{Keywords:} Affine space; polynomials over commutative rings; group of polynomial automorphisms;
group of tame automorphisms

\footnote{Funded by a Free Competition grant from the Netherlands Organisation for Scientific Research (NWO)\\}
\footnote{Joost Berson, Radboud University, Faculty of Science, P.O. Box 9010, 6500 GL Nijmegen,}
\footnote{The Netherlands, j.berson@science.ru.nl}

\section{Introduction}

Let $K[X]:=K[X_1,\ldots\!,X_n]$ be a polynomial ring over a field $K$. 
A natural problem in commutative algebra and algebraic geometry is 
to understand the group $\GA_n(K)$ of automorphisms of $K[X]$ preserving $K$. 
There are various long-standing open problems and conjectures in 
affine algebraic geometry concerning polynomial rings and their automorphisms 
(see~\cite{Essen}, \cite{E-R} and \cite{Kraft} for more details). 
Below we mention a few of the most famous ones. (Precise definitions 
will be provided in later sections.)

Polynomial automorphisms are generally studied over a field of
characteristic zero, but the prime characteristic case is gaining
interest (for example in \cite{Adjam},\cite{Borisov},\cite{DYu09},\cite{Mau01} and \cite{Nous}). 
In Section~\ref{solutions} of this paper, for the problems
and conjectures mentioned below, we give a complete answer in cases 
involving {\it linearized polynomials} (over a finite field $\F_q$), 
the main objects of interest of this paper. These polynomials, 
which are by definition (Section~\ref{PPP}) $\F_q$-linear combinations 
of monomials of the form $X_i^{{}^{q^m}}$, have thus far only been studied in case $n=1$, 
first by Ore in~\cite{Ore1} and~\cite{Ore2} (more on that in the same section). 
Section~\ref{PPP} is also devoted to a proof of the fact that the linearized polynomial maps
over $\F_q$ are in one-to-one correspondence with matrices having
coefficients in a univariate polynomial ring over $\F_q$ (where
$\F_q$ is the finite field with $q$ elements).

Finally, in Section~\ref{nomixedterms}, we will emphasize the
exceptional nature of linearized polynomial maps over finite fields. 
Namely, these maps form a special example of polynomial maps 
without mixed terms, which can be studied over a field of any 
characteristic. But the main result of this last section is that, 
over a characteristic zero field, every automorphism defined by polynomials
without mixed terms is a triangular automorphism (after a linear
transformation of the variables). This is certainly not the case for
linearized polynomial maps over a finite field. Also, the other 
problems and conjectures mentioned below are discussed for the case of 
polynomials without mixed terms. \\
\newline

\vspace{0.1cm}

\noindent \textbf{Tame Generators Problem}: Give necessary and
sufficient conditions for tameness of automorphisms of $K[X]$.\\
\newline
\noindent In two variables, this has already been solved by
Jung~\cite{Jung} and Van der Kulk~\cite{vdK}, saying that {\it all}
automorphisms in two variables are tame. In more variables there is
only one big result: Shestakov and Umirbaev gave a criterion for
tameness (over characteristic zero fields) of automorphisms of the
form $(f_1(X_1,X_2,X_3),f_2(X_1,X_2,X_3),X_3)$ in their
groundbreaking paper~\cite{S-U}. This gave a negative answer to the
question of tameness of the famous Nagata automorphism, introduced
in~\cite{Nag} (viewed as an automorphism in three variables over a field). 
We will show that all linearized polynomial automorphisms are tame,
in any dimension (Theorem~\ref{diagonalize}).\\
\newline

\vspace{0.05cm}

\noindent \textbf{Jacobian Conjecture}: If a polynomial map $f$ over a field
$K$ with $\Char(K)=0$ has invertible Jacobian matrix, then $f$ itself is invertible.\\
\newline
\noindent This famous conjecture was first proposed by
Keller~\cite{Keller} in 1939 for $K=\C$. After more than six decades
of intensive study by mathematicians, the conjecture is still open,
even for the case n = 2. It is listed as one of the 18 important
mathematical problems for the 21st century in Smale's
list~\cite{Smale}. More background and (references to) partial
results on the Jacobian Conjecture can be found in \cite{BCW}
and~\cite{Essen}. In nonzero characteristic the conjecture is 
easily shown to be false, but we will present an analogue of this conjecture 
for linearized polynomial maps, and give a proof (Corollary~\ref{JCPPP}).\\
\newline

\vspace{0.05cm}

\noindent \textbf{Coordinate Recognition Problem}: Given a polynomial $f\in
K[X]$, give necessary and sufficient conditions for $f$ to be a coordinate.\\
\newline
\noindent In case we have two variables, this problem has already
been solved in \cite{CK} and in \cite{lnd3}. The Coordinate Recognition Problem 
is still open for three or more variables. Our Proposition~\ref{unimodular} 
describes exactly when a linearized polynomial is a coordinate.\\
\newline

\vspace{0.05cm}

\noindent \textbf{Polynomial Ring Recognition Problem}: For a finitely
generated $K$-algebra $A$, give necessary and sufficient conditions
for $A$ to be (isomorphic to) a polynomial ring over $K$.\\
\newline
\noindent A necessary condition for being a polynomial ring over $K$
is that $A$ is a domain. Surprisingly, if $A$ is defined by
linearized polynomials this is also sufficient
(Corollary~\ref{PRRP}). This doesn't hold in general for an algebra 
defined over a field of characteristic zero,
where the problem has only been solved in case $A$ is at most
two-generated over $K$.
Corollary~\ref{PRRP} also implies the Abhyankar-Sathaye Conjecture
below, but then for linearized polynomials over a finite field
(Theorem~\ref{ASCq}).
In characteristic zero, this conjecture has only been completely solved for $n\leq2$.\\
\newline
\textbf{Abhyankar-Sathaye Conjecture}: If $\Char(K)=0$ and $f \in K[X_1,\ldots\!,X_n]$ 
satisfies $K[X_1,\ldots\!,X_n]/(f) \cong_KK[Y_1,\ldots\!,Y_{n-1}]$, then $f$ is a coordinate.\\
\newline

\noindent Last but not least, we present the\\
\newline
\textbf{Linearization Conjecture}: If an automorphism over a field
$K$ with $\Char(K)=0$ has finite order, then it is conjugate to a
linear automorphism.\\

\noindent An automorphism that is conjugate to a linear one is called
{\it linearizable}. For $n=2$ the (affirmative) answer easily follows from the
structure of $\GA_2(K)$, which was already observed in \cite{Kraft}.
For $n\geq3$ this conjecture is still unsolved.
However, we will show (Corollary~\ref{linearizable}) that a
linearized polynomial automorphism over $\F_q$ of finite order
relatively prime to $q$, is linearizable.\\

\section{Polynomial maps, conventions}

Associating a matrix to a polynomial map is a recurring thing in this paper, 
so first we write down the basic notations used in this paper concerning matrices. 
Given any commutative ring $R$, let $\M_{m\times n}(R)$ (or $\M_n(R)$, if $m=n$) 
be the set of all $m\times n$ matrices with entries in $R$.
For the group of all invertible matrices in $\M_n(R)$ we use the usual notation
$\GL_n(R)$. $I_n$ will be the identity matrix in $\GL_n(R)$. 

A \textit{polynomial map over $K$} is a list $f=(f_1,\ldots\!,f_m)$
of polynomials in $K[X]$. We can view polynomial maps as $K$-algebra
homomorphisms $K[Y]\to K[X]$, $Y_i\mapsto f_i$, where
$Y:=(Y_1,\ldots\!,Y_m)$ is another list of variables. But they are
often also identified with maps $K^n\to K^m$ given by polynomial
substitutions, which is actually only an exact identification if $K$
is infinite.

Now consider another polynomial map $g=(g_1,\ldots\!,g_n)$, with
each $g_i\in K[Z]$ for yet another list of variables
$Z=(Z_1,\ldots\!,Z_l)\,$. In the usual notation, the composition of
$f$ and $g$ is defined as $f\circ
g=(f_1(g_1,\ldots\!,g_n),\ldots\!,f_m(g_1,\ldots\!,g_n))$.
Restricting to the case $m=n$, the map $f$ is called an
\textit{invertible polynomial map} or \textit{automorphism} if there
exists another $g=(g_1,\ldots\!,g_n)\in K[X]^n$ with $f\circ
g=g\circ f=X$ (the identity map). Furthermore, we call a polynomial
in $K[X]$ a \textit{coordinate} if it equals one of the components
$f_i$ of some automorphism $f$.

The automorphisms form a group, $\GA_n(K)$. $\GL_n(K)$ is usually
viewed as a subgroup (the subgroup of {\it linear automorphisms}), 
but there are more ``usual'' subgroups. They will be introduced 
in this paper where they are needed. As the first and foremost example 
of associating a matrix to a polynomial map, we write $\J\!f$ for 
the Jacobian matrix $(\genfrac{}{}{}{1}{\partial f_i}{\partial X_j})$ 
of a polynomial map $f$. By the chain rule, for any automorphism $f$ 
we have $\J\!f\in \GL_n(K[X])$, whence $|\J\!f| \in K^*$. 
(Throughout this paper, the operator $|\cdot|$ takes the determinant of a matrix.)\\ \ \\

\section{Linearized polynomial maps and the $q$-Jacobian}\label{PPP}\

\noindent Here we will describe the main objects of study of this
paper, and their basic properties. For now, $X$ denotes just one variable.\\

\begin{defn}
Let $q$ be a positive power of a prime number. Then $\F_q[X]^{(q)}$
will be the $\F_q$-subspace of $\F_q[X]$ generated by all monomials
of the form $X^{{}^{q^m}}$ (with $m\geq0$). Furthermore, the {\it
composition} $f\circ g$ of $f,g\in\F_q[X]^{(q)}$ is defined as the
substitution of the two polynomials, {\it i.e.} $(f\circ
g)(X):=f(g(X))$.
\end{defn}\

\begin{rem}\label{corresp0}
The elements of $\F_q[X]^{(q)}$ are precisely the polynomials in
$\F_q[X]$ that induce an $\F_q$-linear map $K\to K$, where $K$ is any
infinite extension field of $\F_q$. Indeed, $X^q$ induces the $\F_q$-linear
map $x\mapsto x^q\ (x\in K)$, and any map induced by an element of
$\F_q[X]^{(q)}$ is an $\F_q$-linear combination of iterates of this
particular map. On the other hand, suppose $f\in\F_q[X]$ induces an
$\F_q$-linear map $K\to K$, and let $X^m$ be a monomial appearing in
$f$. Since $K$ is an infinite field, the hypothesis implies that
$f(X+Y)=f(X)+f(Y)$ and $f(aX)=af(X)$, where $Y$ is a new variable
and $a$ generates the multiplicative group of $\F_q$. Comparing terms 
of equal degree yields $(X+Y)^m=X^m+Y^m$ and $(aX)^m=aX^m$. 
Let $p$ be the unique prime number such that $q=p^r$, with $r\geq1$. 
Suppose $m$ is not a power of $p$, say $m=dp^e$ with $d>1$, $p\nmid d$ 
and $e\geq0$. Then $(X+Y)^{{}^m}=((X+Y)^{{}^{p^e}})^{{}^d}=(X^{{}^{p^e}}+Y^{{}^{p^e}})^{{}^d}$
contains the nonzero term $dX^{{}^{(d-1)p^e}}Y^{{}^{p^e}}$, which
contradicts the fact that $(X+Y)^m=X^m+Y^m$. Hence, $m$ is a power
of $p$. Since $a$ generates $\F_q^*$ and $a\in\F_m$ (as $a^m=a$), we
have $\F_q\subseteq\F_m$. So $\F_m$ is a finite dimensional
$\F_q$-space, whence $m$ is a power of $q$.
\end{rem}\

\noindent The above remark implies that $\F_q[X]^{(q)}\,$ is closed under composition.
Moreover, this composition operation has some remarkable properties 
compared to the composition of any two univariate polynomials over any field 
(which can be defined in a similar way). For one easily verifies that
\begin{itemize}
\item $f(g+h)=f(g)+f(h)\ \ \forall\ f,g,h \in \F_q[X]^{(q)}$
\item composition is commutative: $\ f(g(X))=g(f(X))\ \ \forall\ f,g \in \F_q[X]^{(q)}$
\end{itemize}
(The first property follows directly from Remark~\ref{corresp0}.)
Using these facts, it is easy to check that $\F_q[X]^{(q)}\,$ is a
commutative ring (with addition inherited from $\F_q[X]$, and
``multiplication'' being composition). Also, note that $X$ is the
identity element in this ring, and that $\F_q\to\F_q[X]^{(q)}\!$,
$\,a\mapsto aX$ makes $\F_q[X]^{(q)}$ an $\F_q$-algebra. In fact,
Theorem~\ref{corresp1} will show that $\F_q[X]^{(q)}\,$ is
isomorphic as $\F_q$-algebra to the univariate polynomial ring over
$\F_q$ !

Here we should remark that linearized polynomials (sometimes referred to
as ``$p$-polynomials'' or ``$q$-polynomials'') have already been studied in several
papers. Their focus is mostly on the fact that the roots of a linearized polynomial
form an $\F_q$-subspace of its splitting field (the kernel of the induced linear map).
The result of Theorem~\ref{corresp1} was first mentioned by Ore (\cite{Ore1},\cite{Ore2}).
Later, the property mentioned in Remark~\ref{corresp0} was noted in~\cite{Berlekamp}
and~\cite{Jamison}. Both properties also appeared in~\cite{Baker},\cite{Lidl} and~\cite{MWSloane}.
However, in this section we will also define {\it multivariate} linearized
polynomials (the main objects of study of this paper), which have not been studied before in the literature.\\

\begin{thm}\label{corresp1}
There is a unique isomorphism of $\F_q$-algebras $\delta:\
\F_q[X]^{(q)}\, \rightarrow \F_q[t]$ such that
$\delta\Big(X^{{}^{q^m}}\Big)=\,t^m$ for all $m\geq0$. Thus,
$\delta(f(g))=\delta(f)\cdot\delta(g)\ \ \forall\ f,g \in
\F_q[X]^{(q)}$.
\end{thm}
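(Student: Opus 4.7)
The plan is to define $\delta$ on the $\F_q$-basis $\{X^{{}^{q^m}}\}_{m\geq 0}$ of $\F_q[X]^{(q)}$ by $\delta(X^{{}^{q^m}}) = t^m$, extend by $\F_q$-linearity, and then verify that this $\F_q$-linear bijection automatically respects composition. Uniqueness is immediate: any $\F_q$-algebra homomorphism with the prescribed images on a basis is uniquely determined by $\F_q$-linearity. (Equivalently, $X^q$ alone generates $\F_q[X]^{(q)}$ as an $\F_q$-algebra under composition, since $X^{{}^{q^m}} = X^q\circ \cdots \circ X^q$, so specifying $\delta(X^q) = t$ forces $\delta$ on every element.)

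For existence, the map $\delta$ defined as above is an $\F_q$-linear bijection onto $\F_q[t]$ because it sends an $\F_q$-basis to an $\F_q$-basis. It sends the identity element $X = X^{{}^{q^0}}$ to $1 = t^0$ and, by construction, commutes with addition. The only nontrivial point is multiplicativity with respect to composition.

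For this step, I would compute $f\circ g$ explicitly for arbitrary $f = \sum_m a_m X^{{}^{q^m}}$ and $g = \sum_n b_n X^{{}^{q^n}}$ in $\F_q[X]^{(q)}$. By definition, $f\circ g = \sum_m a_m\, g^{{}^{q^m}}$, so the crux is to expand $g^{{}^{q^m}}$. Because $q$ is a power of the characteristic $p$, the Frobenius identity $(u+v)^{{}^{q^m}} = u^{{}^{q^m}} + v^{{}^{q^m}}$ gives $g^{{}^{q^m}} = \sum_n b_n^{{}^{q^m}} X^{{}^{q^{n+m}}}$. Since $b_n \in \F_q$ satisfies $b_n^q = b_n$, the scalar coefficients are untouched and $g^{{}^{q^m}} = \sum_n b_n X^{{}^{q^{n+m}}}$ lies again in $\F_q[X]^{(q)}$. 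Substituting yields $f\circ g = \sum_{m,n} a_m b_n X^{{}^{q^{m+n}}}$, and applying $\delta$ gives $\sum_{m,n} a_m b_n t^{m+n} = \delta(f)\cdot \delta(g)$, as required.

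I expect no serious obstacle; the only subtle point worth emphasizing is the dual use of Frobenius — distributing the $q^m$-th power over the sum defining $g$, and simultaneously exploiting $b^q = b$ for $b\in \F_q$ — which is precisely what makes the bilinear expression for $f\circ g$ mirror the ordinary product in $\F_q[t]$.
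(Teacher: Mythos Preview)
Your proof is correct. The paper takes a slightly different and more streamlined route: instead of defining $\delta$ directly and verifying multiplicativity by an explicit Frobenius computation, it constructs the \emph{inverse} map first. By the universal property of the polynomial algebra $\F_q[t]$, there is a unique $\F_q$-algebra homomorphism $\F_q[t]\to\F_q[X]^{(q)}$ sending $t\mapsto X^q$; since this map carries the basis $\{t^m\}$ to the basis $\{X^{q^m}\}$, it is a vector space isomorphism and hence an algebra isomorphism, with $\delta$ its inverse. The paper's approach buys brevity --- multiplicativity comes for free from the universal property --- but it tacitly relies on the ring axioms for $(\F_q[X]^{(q)},+,\circ)$ having already been checked in the preceding discussion. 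Your approach is more self-contained: the explicit expansion $f\circ g=\sum_{m,n}a_m b_n X^{q^{m+n}}$ simultaneously verifies distributivity, commutativity, and the isomorphism, so nothing is swept under the rug.
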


\begin{proof}
By the universal property of $\F_q$-algebras, there is a unique
$\F_q$-algebra homomorphism $\F_q[t]\to\F_q[X]^{(q)}$ such that
$t\mapsto X^q$. This map clearly gives a one-to-one correspondence
between the $\F_q$-bases $\{t^m\,|\,m\geq0\}$ and
$\{X^{{}^{q^m}}|\,m\geq0\}$. Hence, the algebra homomorphism is a
vector space isomorphism, and thus even an $\F_q$-algebra
isomorphism (with inverse $\delta$).
\end{proof}\

\noindent Now let $X:=(X_1,\ldots\!,X_n)$ be a list of variables. Then
the polynomials in
$$
\F_q[X]^{(q)}:=\F_q[X_1]^{(q)}\,\oplus\,\cdots\,\oplus\,\F_q[X_n]^{(q)}
$$
are called \emph{(multivariate) linearized polynomials in $X_1,\ldots\!,X_n$}. 
And the elements of $(\F_q[X]^{(q)})^m$ (as subset of $\F_q[X]^m$) are 
the \emph{(multivariate) linearized polynomial maps}.\\

\begin{rem}
 The elements of $\F_q[X]^{(q)}$ are precisely the polynomials in
$\F_q[X]$ that induce an $\F_q$-linear map $K^n\to K$, where $K$ is any
infinite extension field of $\F_q$. Indeed, any term of a given element of
$\F_q[X]^{(q)}$ is in fact an element of $\F_q[X_i]^{(q)}$ for some $i$
(and the induced map $K^n\to K$ factorizes through the projection $K^n\to K$
on the $i$th factor), so Remark~\ref{corresp0} implies that elements
of $\F_q[X]^{(q)}$ induce $\F_q$-linear maps. On the other hand,
suppose $f\in\F_q[X]$ induces an $\F_q$-linear map $K^n\to K$,
and let $X_1^{m_1}\cdots X_n^{m_n}$ be a monomial appearing in $f$.
As $K$ is an infinite field, the hypothesis implies that
$f(X+Y)=f(X)+f(Y)$, where $Y:=(Y_1,\ldots\!,Y_n)$ is a new
list of variables. But then
\begin{equation}\label{multilin}
(X_1+Y_1)^{m_1}\cdots(X_n+Y_n)^{m_n}=X_1^{m_1}\cdots X_n^{m_n}+Y_1^{m_1}\cdots Y_n^{m_n}
\end{equation}
since the lefthandside exactly contains all terms 
$X_1^{\alpha_1}Y_1^{\beta_1}\cdots X_n^{\alpha_n}Y_n^{\beta_n}$ in $f(X+Y)$ 
such that $\alpha_i+\beta_i=m_i$ for all $i$. Now suppose we have $i\neq j$ 
such that both $m_i>0$ and $m_j>0$. Substituting $X_i=Y_j=0$ in~(\ref{multilin}), we get that
$$
Y_i^{m_i}X_j^{m_j}\prod_{k\neq i,j}(X_k+Y_k)^{m_k} = 0
$$
which is a contradiction. Thus, only one of the $m_i$ is positive, {\it i.e.}
the monomial under consideration is a power of one of the $X_i$. As a result,
$f=f_1+\cdots+f_n$ with $f_i\in K[X_i]$ for all $i$. We may even assume that
$f_1(0)=\cdots=f_n(0)=0$, since the hypothesis on $f$ implies that it has no
constant term. Then each $f_i$ induces an $\F_q$-linear map $K\to K$ 
(the composition of $f$ and the embedding $K\to K^n$, $\alpha\mapsto\alpha e_i$).
Remark~\ref{corresp0} now implies that $f\in\F_q[X]^{(q)}$.
\end{rem}\

\noindent The composition of linearized polynomial maps gives another one: if $Z:=(Z_1,\ldots\!,Z_l)$
is another list of variables, then the composition (already defined for polynomial maps in general)
of $f=(f_1,\ldots\!,f_m)\in(\F_q[X]^{(q)})^m$ and $g=(g_1,\ldots\!,g_n)\in(\F_q[Z]^{(q)})^n$ is the element
$f\circ g=(f_1(g_1,\ldots\!,g_n),\ldots\!,f_m(g_1,\ldots\!,g_n))$, which can easily be shown to be
an element of $(\F_q[Z]^{(q)})^m$. For the case $m=n$ this implies that $(\F_q[X]^{(q)})^n$ 
is closed under composition. Also, Theorem~\ref{diagonalize} will show that 
$\GA_n(\F_q)^{(q)}:=\GA_n(\F_q)\cap(\F_q[X]^{(q)})^n$ is a subgroup of $\GA_n(\F_q)$. 

Theorem~\ref{corresp2} will show that we can view polynomial maps in $(\F_q[X]^{(q)})^m$
as matrices having univariate polynomials over $\F_q$ as entries. To make this explicit,
we define the $q$-Jacobian of polynomial maps of this form. The definition is based on
certain maps $\delta_j$ (one for each variable $X_j$) that are very similar to the map
$\delta$ of Theorem~\ref{corresp1}.\\

\begin{defn} \label{J_q}
Let $f=(f_1,\ldots\!,f_m)\in(\F_q[X]^{(q)})^m$, and $t$ a new variable.
For each $j \in \{1,\ldots\!,n\}$, let $\delta_j:\ \F_q[X]^{(q)}\, \rightarrow \F_q[t]$ be
the $\F_q$-linear map uniquely determined by
$$
\delta_j\left(X_i^{{}^{q^m}}\right) = \left\{\begin{array}{ll} t^m & i=j\\ 0 & i\neq j\end{array}\right.\ \ \ \ (m=0,1,2,\ldots\,)
$$
Furthermore, we define $\J_q(f)$ as the matrix $(\delta_j(f_i))\in\M_{m\times n}(\F_q[t])$, 
and call it the {\it $q$-Jacobian} of $f$ (or ``{\it J-$q$-bian}'').
\end{defn}\

\begin{rem}\label{1to1}
The map
\begin{eqnarray*}
(\F_q[X]^{(q)})^m & \longrightarrow & \M_{m\times n}(\F_q[t])\\
(f_1,\ldots\!,f_m) & \mapsto & \J_q(f)
\end{eqnarray*}
is obviously one-to-one and onto. We will need this fact henceforth.
\end{rem}\

\noindent Now let $g=(g_1,\ldots\!,g_n)\in(\F_q[Z]^{(q)})^n$.
We will denote the maps $\F_q[Z]^{(q)}\, \rightarrow \F_q[t]$
(similarly defined as the $\delta_j$) by $\varepsilon_j$. In this situation we have\\

\begin{thm} \label{corresp2}
If $f=(f_1,\ldots\!,f_m)\in(\F_q[X]^{(q)})^m$ and $g=(g_1,\ldots\!,g_n)\in(\F_q[Z]^{(q)})^n$,
then $\J_q(f\circ g)=\J_q(f)\J_q(g)$.

In particular, $\J_q$ induces an isomorphism of $\,\F_q$-algebras $\,(\F_q[X]^{(q)})^n \overset\sim\longrightarrow \M_n(\F_q[t])$.
\end{thm}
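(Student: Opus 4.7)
The plan is to verify the matrix identity $\J_q(f\circ g)=\J_q(f)\J_q(g)$ entry-by-entry, and then deduce the $\F_q$-algebra isomorphism as a short formality using Remark~\ref{1to1}. The $(i,k)$ entry of the left-hand side is $\varepsilon_k(f_i(g_1,\ldots,g_n))$, and of the right-hand side is $\sum_{j=1}^n \delta_j(f_i)\cdot\varepsilon_k(g_j)$. Since polynomial substitution is additive in the outer polynomial and the maps $\delta_j,\varepsilon_k$ are $\F_q$-linear, both sides are $\F_q$-linear in $f_i$; so it suffices to verify the identity when $f_i = X_j^{q^m}$ is a single monomial.

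For this monomial case, I would first decompose $g_j = g_{j,1}+\cdots+g_{j,n}$ with $g_{j,l}\in\F_q[Z_l]^{(q)}$. Because $q$ is a power of $\Char(\F_q)$, Frobenius distributes over the sum: $g_j^{q^m} = \sum_l g_{j,l}^{q^m}$, with each $g_{j,l}^{q^m}\in\F_q[Z_l]^{(q)}$. Hence $\varepsilon_k(g_j^{q^m}) = \varepsilon_k(g_{j,k}^{q^m})$, since $\varepsilon_k$ vanishes on the other summands. Now $g_{j,k}^{q^m}$ is precisely the composition $Z_k^{q^m}\circ g_{j,k}$ in the univariate ring $\F_q[Z_k]^{(q)}$, and the restriction of $\varepsilon_k$ to that ring is (up to renaming the variable) the isomorphism of Theorem~\ref{corresp1}. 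That theorem then converts composition into a product, giving $\varepsilon_k(g_{j,k}^{q^m}) = \varepsilon_k(Z_k^{q^m})\cdot\varepsilon_k(g_{j,k}) = t^m\cdot\varepsilon_k(g_j)$. The right-hand side of the claimed identity collapses to the same value, since only $j'=j$ survives in $\sum_{j'}\delta_{j'}(X_j^{q^m})\cdot\varepsilon_k(g_{j'})$.

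For the ``in particular'' statement, Remark~\ref{1to1} already supplies a set-theoretic bijection $(\F_q[X]^{(q)})^n\to\M_n(\F_q[t])$. The $\F_q$-linearity of each $\delta_j$ makes $\J_q$ both additive and $\F_q$-linear, and $\J_q(X_1,\ldots,X_n)=I_n$ because $\delta_j(X_i)=\delta_j(X_i^{q^0})$ equals $1$ when $i=j$ and $0$ otherwise. Combined with the multiplicativity just established, these imply the claimed $\F_q$-algebra isomorphism; a quick aside verifies that $(\F_q[X]^{(q)})^n$ really is an $\F_q$-algebra, the scalar embedding $a\mapsto(aX_1,\ldots,aX_n)$ being central thanks to $f_i(aX)=af_i(X)$ for every linearized $f_i$.

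The single genuinely non-trivial step is the Frobenius distribution $g_j^{q^m}=\sum_l g_{j,l}^{q^m}$ applied to the cross-variable decomposition of $g_j$; everything else is bookkeeping that the definitions of $\delta_j$ and $\varepsilon_k$ were engineered to trivialize. This Frobenius step is exactly what makes linearized substitution computable ``one variable at a time'' and therefore compatible with matrix multiplication, and it is the only place where the characteristic-$p$ hypothesis plays a role.
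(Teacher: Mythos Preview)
Your proof is correct and follows essentially the same route as the paper's. The paper decomposes $f_i=\sum_k f_i^{(k)}(X_k)$ by variable rather than reducing to a single monomial, and then uses the general additivity $f_i^{(k)}\big(\sum_r g_k^{(r)}\big)=\sum_r f_i^{(k)}(g_k^{(r)})$ of linearized polynomials in place of your Frobenius step; after that both arguments invoke Theorem~\ref{corresp1} on the resulting univariate compositions in exactly the same way. (One cosmetic slip: your decomposition $g_j=g_{j,1}+\cdots+g_{j,n}$ should run to $l$, the number of $Z$-variables, not $n$.)
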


\begin{proof}
Write $f_i=\sum_{k=1}^n f_i^{\scriptscriptstyle (k)}(X_k)$ and
$g_i=\sum_{r=1}^l g_i^{\scriptscriptstyle (l)}(Z_r)$ for all $i$. Then
$$
f_i(g) = \sum_{k=1}^n f_i^{\scriptscriptstyle (k)}(g_k) =
\sum_{k=1}^n f_i^{\scriptscriptstyle (k)}(\sum_{r=1}^l
g_k^{\scriptscriptstyle (r)}(Z_r)) = \sum_{r=1}^l \sum_{k=1}^n
f_i^{\scriptscriptstyle (k)}(g_k^{\scriptscriptstyle (r)}(Z_r))
$$
and thus the $(i,j)$-entry of $\J_q(f(g))$ equals
\begin{eqnarray*}
\varepsilon_j(f_i(g))=\varepsilon_j\,(\sum_{k=1}^n f_i^{\scriptscriptstyle
(k)}(g_k^{\scriptscriptstyle (j)}(Z_j))) & = &
\sum_{k=1}^n\,\varepsilon_j\,(f_i^{\scriptscriptstyle
(k)}(Z_j))\cdot\varepsilon_j\,(g_k^{\scriptscriptstyle (j)}(Z_j))\\
 & = & \sum_{k=1}^n\,\delta_k\,(f_i^{\scriptscriptstyle
(k)}(X_k))\cdot\varepsilon_j\,(g_k)\\
 & = & \sum_{k=1}^n\,\delta_k\,(f_i)\cdot\varepsilon_j\,(g_k)
\end{eqnarray*}
which is exactly equal to the $(i,j)$-entry of the product
$(\delta_j(f_i))\cdot(\varepsilon_j(g_i))\,$. Thus,
$\J_q(f(g))=\J_q(f)\cdot\J_q(g)$. The second statement follows from Remark~\ref{1to1}.
\end{proof}\

\section{The famous problems and conjectures for linearized polynomials}\label{solutions}

\noindent This section is devoted to the solutions that we found for 
the famous problems and conjectures that were stated in the Introduction, 
for the cases where the involved polynomials are linearized polynomials.\\

\subsection{Tame Generators Problem and Jacobian Conjecture}\

\noindent Before solving the Tame Generators Problem for linearized polynomial maps,
we recall the concept of tameness.\\

\begin{defn}
$\EA_n(K)$ (for any field $K$) is the subgroup of $\GA_n(K)$
generated by the elementary automorphisms.  An {\it elementary}
automorphism is one of the form $(X_1,\ldots\!,X_{i-1},X_i+f_i,X_{i+1},\ldots\!,X_n)$
for some $i$, where $f_i \in K[\hat{X}_i]$. Furthermore, $\TA_n(K)$, the
group of {\it tame} automorphisms, is the subgroup generated by
$\GL_n(K)$ and $\EA_n(K)$.
\end{defn}\

\noindent As mentioned in the Introduction, the question which automorphisms 
are tame is still open in general if $n\geq3$. However, Theorem~\ref{diagonalize} 
will show that all invertible linearized polynomial maps are tame. 
To formulate the precise statement, we need to define a few automorphism subgroups 
consisting of linearized polynomial maps. First, we put
$$
\EA_n(\F_q)^{(q)}:=\langle\,(X_1,\ldots\!,X_{i-1},X_i+f_i,X_{i+1},\ldots\!,X_n)\,|\,1\leq i\leq n, f_i \in \F_q[\hat{X}_i]^{(q)}\,\rangle
$$
Furthermore, let $\TA_n(\F_q)^{(q)}:=\langle\,\EA_n(\F_q)^{(q)}\,,\GL_n(\F_q)\,\rangle$.
Under the isomorphism of Theorem~\ref{corresp2}, the subgroup $\EA_n(\F_q)^{(q)}$
corresponds to $\E_n(\F_q[t])$, the subgroup of $\GL_n(\F_q[t])$
generated by all elementary matrices. Also, this isomorphism is the identity
on $\GL_n(\F_q)$. \\

\begin{thm} \label{diagonalize}
Let $f=(f_1,\ldots\!,f_m) \in (\F_q[X]^{(q)})^m$. Then there exist
$h_1 \in \TA_m(\F_q)^{(q)}$ and $h_2 \in \TA_n(\F_q)^{(q)}$ such
that $h_1fh_2$ is a ``diagonal map'', \emph{i.e.} a map of the form
$g=(g_1,\ldots\!,g_m)$, where $g_i \in \F_q[X_i]^{(q)}$ for all $i$
(and $g_i=0$ if $m>n$ and $n<i\leq m$).

Furthermore, $\GA_n(\F_q)^{(q)}=\TA_n(\F_q)^{(q)}$.
\end{thm}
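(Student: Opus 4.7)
The plan is to translate the statement, via the isomorphism of Theorem~\ref{corresp2}, into a Smith-normal-form statement for matrices over the Euclidean domain $\F_q[t]$, and then to read the conclusion back. Concretely, by Remark~\ref{1to1} and Theorem~\ref{corresp2}, $f\mapsto\J_q(f)$ is a bijection $(\F_q[X]^{(q)})^m\to\M_{m\times n}(\F_q[t])$ under which composition on either side becomes matrix multiplication; moreover, as observed just above the theorem, it carries $\TA_k(\F_q)^{(q)}$ onto $\langle\E_k(\F_q[t]),\GL_k(\F_q)\rangle$ for $k=m,n$. The first assertion therefore reduces to showing that every $M\in\M_{m\times n}(\F_q[t])$ can be written as $UDV$ with $U\in\langle\E_m(\F_q[t]),\GL_m(\F_q)\rangle$, $V\in\langle\E_n(\F_q[t]),\GL_n(\F_q)\rangle$, and $D$ diagonal.

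To establish this I would run the usual Smith normal form algorithm, which is effective here because $\F_q[t]$ is Euclidean: pick a nonzero entry of minimal $t$-degree, bring it to position $(1,1)$ by row and column swaps, and for every other entry in the first row or column perform Euclidean division by the pivot and subtract the appropriate multiple of the first row (or column). Each step either strictly decreases the pivot degree or kills an off-diagonal entry of the first row/column, so the process terminates with a matrix whose first row and column vanish outside $(1,1)$; then one recurses on the trailing $(m-1)\times(n-1)$ block. When $m>n$ the last $m-n$ rows come out zero, matching the clause $g_i=0$. The only operations used are row/column additions (which are elementary) and row/column swaps, and a transposition of two rows equals a product of three elementary matrices multiplied by a diagonal matrix with entries $\pm1$; it therefore lies in $\langle\E,\GL(\F_q)\rangle$. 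Crucially, the units of $\F_q[t]$ are just $\F_q^*$, so no scaling by a nonconstant unit is ever needed.

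For the second assertion, take $f\in\GA_n(\F_q)^{(q)}$ and apply the first part with $m=n$: there exist $h_1,h_2\in\TA_n(\F_q)^{(q)}$ such that $g:=h_1fh_2$ is diagonal, say $g=(g_1,\ldots,g_n)$ with $g_i\in\F_q[X_i]^{(q)}$. Since $f,h_1,h_2$ are automorphisms, so is $g$, whence $\J_q(g)\in\GL_n(\F_q[t])$. But $\J_q(g)$ is the diagonal matrix with entries $\delta_i(g_i)$, so each $\delta_i(g_i)$ must be a unit of $\F_q[t]$, i.e.\ an element of $\F_q^*$. Hence $\J_q(g)\in\GL_n(\F_q)$, so $g\in\GL_n(\F_q)\subseteq\TA_n(\F_q)^{(q)}$, and consequently $f=h_1^{-1}gh_2^{-1}\in\TA_n(\F_q)^{(q)}$. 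The reverse inclusion is immediate from the definition.

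The theorem is essentially the Smith normal form for $\F_q[t]$ dressed up through Theorem~\ref{corresp2}; the only point needing a moment's care is checking that every matrix operation used by the reduction is available in $\langle\E_k(\F_q[t]),\GL_k(\F_q)\rangle$, which comes down to the pleasant accident that the units of $\F_q[t]$ are exactly $\F_q^*$.
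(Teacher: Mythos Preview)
Your reduction of the first assertion to Smith normal form over the Euclidean domain $\F_q[t]$ is exactly the paper's argument and is fine.

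The second paragraph, however, has a circularity. You write ``Since $f,h_1,h_2$ are automorphisms, so is $g$, whence $\J_q(g)\in\GL_n(\F_q[t])$.'' The implication ``$g\in\GA_n(\F_q)^{(q)}\Rightarrow\J_q(g)\in\GL_n(\F_q[t])$'' is precisely the forward direction of Corollary~\ref{JCPPP}, whose proof \emph{uses} Theorem~\ref{diagonalize}. The underlying issue is that at this point you only know $g^{-1}$ exists in $\GA_n(\F_q)$; you do \emph{not} yet know that $g^{-1}$ is itself a linearized polynomial map (the paper observes right after the theorem that closure of $\GA_n(\F_q)^{(q)}$ under inverses is a \emph{consequence} of it). Without that, Theorem~\ref{corresp2} cannot be invoked to manufacture a matrix inverse for $\J_q(g)$.

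The paper closes this gap by arguing directly with the diagonal $g$: let $h:=g^{-1}\in\GA_n(\F_q)$ (not assumed linearized). Since $g_i$ depends only on the $i$th variable, the identity $g\circ h=X$ reads $g_i(h_i(X))=X_i$ for each $i$. Working in $\F_q[\hat X_i][X_i]$ (a polynomial ring over a domain) and comparing degrees forces $\deg g_i=1$; as $g_i\in\F_q[X_i]^{(q)}$ has no constant term, $g_i=a_iX_i$ with $a_i\in\F_q^*$, so $g\in\GL_n(\F_q)\subseteq\TA_n(\F_q)^{(q)}$. Once you insert this elementary degree argument in place of the unjustified ``whence'', your proof goes through.
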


\begin{proof}
$\J_q(f)$ is a matrix over a Euclidean domain, so there exist $M \in
\GL_m(\F_q[t])$
$(\,=\nolinebreak[4]\!\langle\E_m(\F_q[t]),\GL_m(\F_q)\rangle\,)$
and $N \in \GL_n(\F_q[t])$ such that $M\J_q(f)N$ is a (in general
non-square) diagonal matrix. By Remark~\ref{1to1}, there exist $h_1 \in
\TA_m(\F_q)^{(q)}$ and $h_2 \in \TA_n(\F_q)^{(q)}$ such that
$h_1fh_2$ is of the prescribed form. For the next statement, suppose
$f \in \GA_n(\F_q)^{(q)}$. The above says that $f$ is tamely equivalent to a map
$g=(g_1,\ldots\!,g_n)$ with $g_i \in \F_q[X_i]^{(q)}\,$ for all $i$.
Since $f$ is an automorphism, $g$ is too, so let
$h=(h_1,\ldots\!,h_n) \in \GA_n(\F_q)$ be the inverse of $g$. Since
$\F_q[X_1,\ldots\!,X_{i-1},X_{i+1},\ldots\!,X_n]$ is a domain, the
equations $g_i(h_i(X))=X_i$ imply that $h_i\in\F_q[X_i]$ and that both $g_i$ and 
$h_i$ have degree 1 (for all $i$). Consequently, $f \in \TA_n(\F_q)^{(q)}$.
\end{proof}\

\noindent Note that this theorem in particular implies that 
$f^{-1} \in \GA_n(\F_q)^{(q)}$ if $f \in \GA_n(\F_q)^{(q)}$, {\it i.e.} 
$\GA_n(\F_q)^{(q)}$ is a subgroup of $\GA_n(\F_q)$. As a result, 
we can affirm an analogue of the Jacobian Conjecture 
for linearized polynomial maps and their $q$-Jacobians.\\

\begin{cor}\label{JCPPP}
$f \in (\F_q[X]^{(q)})^n$ is an automorphism if and only if $\J_q(f) \in \GL_n(\F_q[t])$.
\end{cor}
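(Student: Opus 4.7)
The plan is to combine the bijection of Remark~\ref{1to1} with the multiplicativity proven in Theorem~\ref{corresp2}, together with the observation (noted immediately before the corollary) that $\GA_n(\F_q)^{(q)}$ is a subgroup of $\GA_n(\F_q)$. Put together, these say that the square case of $\J_q$ is an isomorphism of monoids from $((\F_q[X]^{(q)})^n,\circ)$ onto $(\M_n(\F_q[t]),\cdot)$, sending the identity map $X=(X_1,\ldots,X_n)$ to the identity matrix $I_n$ (since $\delta_j(X_i)=\delta_j(X_i^{q^0})$ equals $1$ for $i=j$ and $0$ otherwise). Isomorphisms of monoids preserve the property of having a two-sided inverse, so the equivalence should drop out almost immediately.

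For the forward direction, assume $f$ is an automorphism. The note after Theorem~\ref{diagonalize} tells us $f^{-1}\in(\F_q[X]^{(q)})^n$, so Theorem~\ref{corresp2} gives $\J_q(f)\J_q(f^{-1})=\J_q(f\circ f^{-1})=\J_q(X)=I_n$ and symmetrically on the other side; hence $\J_q(f)\in\GL_n(\F_q[t])$. For the converse, assume $\J_q(f)\in\GL_n(\F_q[t])$ and let $M:=\J_q(f)^{-1}$. By Remark~\ref{1to1}, there is a unique $g\in(\F_q[X]^{(q)})^n$ with $\J_q(g)=M$. Applying Theorem~\ref{corresp2} twice, $\J_q(f\circ g)=\J_q(f)\J_q(g)=I_n=\J_q(X)$ and $\J_q(g\circ f)=I_n=\J_q(X)$. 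The injectivity in Remark~\ref{1to1} then forces $f\circ g=g\circ f=X$, so $f\in\GA_n(\F_q)$.

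The only step that is not a purely formal translation through the isomorphism $\J_q$ is the assertion, in the forward direction, that the inverse of $f$ already lies in $(\F_q[X]^{(q)})^n$; a priori the inverse is only known to be an element of $\F_q[X]^n$, and without this closure one could not apply $\J_q$ to it. This is exactly the content of the remark following Theorem~\ref{diagonalize}, which itself rested on the Euclidean-domain argument there. So the genuine work is already done, and the proof of the corollary consists of writing out the matrix identities above.
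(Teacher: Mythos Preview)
Your proof is correct and follows essentially the same route as the paper's own argument: both directions invoke Theorem~\ref{corresp2} and Remark~\ref{1to1}, with the forward direction additionally using the closure of $\GA_n(\F_q)^{(q)}$ under inverses established via Theorem~\ref{diagonalize}. Your write-up is in fact slightly more explicit than the paper's, spelling out the injectivity step that the paper leaves implicit.
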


\begin{proof}
This follows from Theorem~\ref{corresp2} and Theorem~\ref{diagonalize}: 
if $f \in \GA_n(\F_q)^{(q)}$ then $f^{-1} \in \GA_n(\F_q)^{(q)}$ and 
$\J_q(f)\J_q(f^{-1})=\J_q(ff^{-1})=I_n$. If on the other hand $\J_q(f) \in \GL_n(\F_q[t])$, 
let $g\in(\F_q[X]^{(q)})^n$ such that $\J_q(g)=(\J_q(f))^{-1}$ 
(which exists by Remark~\ref{1to1}). Then $\J_q(fg)=\J_q(f)\J_q(g)=I_n$ 
implies that $f$ is an automorphism with inverse $g$. 
\end{proof}

\noindent Note that if we take the {\it usual} Jacobian, the statement
doesn't hold; namely, the Jacobian of any linearized polynomial map
equals the Jacobian of its linear part.\\

\subsection{Coordinate Recognition Problem}\

\noindent Corollary~\ref{JCPPP} provides us with the following useful tool:
a criterion to decide whether a linearized polynomial is a coordinate.\\

\begin{prop} \label{unimodular}
For $f_1 \in \F_q[X]^{(q)}\,$, the following are equivalent.
\begin{enumerate}
\item $f_1$ is a coordinate of an automorphism in $\F_q[X]^n$
\item $f_1$ is a coordinate of an automorphism in $(\F_q[X]^{(q)})^n$
\item $(\delta_1(f_1),\ldots\!,\delta_n(f_1))=(1)$ in $\F_q[t]$
\end{enumerate}
\end{prop}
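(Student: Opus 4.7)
I would prove the three equivalences via the chain $(2)\Rightarrow(1)\Rightarrow(3)\Rightarrow(2)$. The implication $(2)\Rightarrow(1)$ is immediate, since any automorphism in $(\F_q[X]^{(q)})^n$ is in particular an automorphism in $\F_q[X]^n$. For $(3)\Rightarrow(2)$, since $\F_q[t]$ is a Euclidean domain, a row $(\delta_1(f_1),\ldots,\delta_n(f_1))$ of unit content can be completed to some $M\in\GL_n(\F_q[t])$ by applying the Euclidean algorithm to the row (that is, via elementary column operations). By Remark~\ref{1to1} there is a unique $f=(f_1,f_2,\ldots,f_n)\in(\F_q[X]^{(q)})^n$ with $\J_q(f)=M$, and Corollary~\ref{JCPPP} guarantees that this $f$ is an automorphism.

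All the substance is in $(1)\Rightarrow(3)$, which I would prove contrapositively. Suppose $f_1$ is a component of some $f\in\GA_n(\F_q)$, so that after tensoring with $\bar\F_q$ we get an automorphism of $\bar\F_q[X]$; consequently $(f_1)\subseteq\bar\F_q[X]$ is prime and $f_1$ is irreducible in $\bar\F_q[X]$. Let $d(t):=\gcd(\delta_1(f_1),\ldots,\delta_n(f_1))$ in $\F_q[t]$ and assume for contradiction that $\deg d\geq 1$. Write $\delta_j(f_1)=d\cdot a_j$ with $\gcd(a_1,\ldots,a_n)=1$. Pick (using Remark~\ref{1to1}) a single-variable $D\in\F_q[Y]^{(q)}$ with $\delta(D)=d$, and a $g_1\in\F_q[X]^{(q)}$ with $\delta_j(g_1)=a_j$. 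By Theorem~\ref{corresp2}, the $q$-Jacobian of $D\circ g_1$ equals $(d(t)a_1(t),\ldots,d(t)a_n(t))=\J_q(f_1)$, and so by Remark~\ref{1to1} we have the composition factorization $f_1=D(g_1)=\sum_m c_m g_1^{q^m}$, where $D(Y)=\sum_m c_m Y^{q^m}$.

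Now $\deg d\geq 1$ means $D$ has ordinary degree $q^k\geq q\geq 2$ as a polynomial in $Y$; hence, over $\bar\F_q$, $D$ splits as $D=P\cdot Q$ with $\deg P,\deg Q\geq 1$. Substituting $Y\mapsto g_1$ produces a factorization $f_1=P(g_1)Q(g_1)$ in $\bar\F_q[X]$. Both factors are non-units: $g_1$ is nonzero (as $\gcd(a_j)=1$ forces some $a_j\neq 0$) and has no constant term (linearized polynomials never do), so $g_1$ is a non-constant polynomial, and then $\deg_X P(g_1),\deg_X Q(g_1)\geq\deg_X g_1\geq 1$. This contradicts the irreducibility of $f_1$ in $\bar\F_q[X]$.

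The main obstacle is exactly this $(1)\Rightarrow(3)$ step; the key conceptual move is to recognize that divisibility of the row $(\delta_j(f_1))_j$ in $\F_q[t]$ encodes a \emph{composition} factorization $f_1=D\circ g_1$ in $\F_q[X]^{(q)}$, and then to use the elementary fact that a univariate linearized polynomial of $q$-degree $\geq 1$ automatically splits over $\bar\F_q$ (because its ordinary degree is $\geq q\geq 2$). Once this translation is in place, the irreducibility of coordinates finishes the job.
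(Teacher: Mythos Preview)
Your proof is correct. For the easy directions (your $(2)\Rightarrow(1)$ and $(3)\Rightarrow(2)$) you and the paper argue in essentially the same way. For the substantive implication $(1)\Rightarrow(3)$, both arguments begin with the same composition factorization $f_1=D(g_1)$ obtained by pulling out $d=\gcd_j(\delta_j(f_1))$ from the row---this is precisely the content of Corollary~\ref{polycoord}, which you reprove inline. The two proofs then diverge. The paper uses that $g_1$ is itself a \emph{linearized} coordinate (by the already-established $(3)\Rightarrow(2)$), composes the given automorphism with the inverse of a linearized automorphism extending $g_1$, and thereby reduces to showing that the univariate polynomial $D(X_1)$ can be a coordinate only when $\deg D=1$; this last step is the short degree argument from Theorem~\ref{diagonalize}. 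You instead pass to $\bar\F_q$, use that coordinates remain irreducible there, and observe that $\deg d\geq1$ forces $\deg_Y D\geq q\geq2$, so $D$ splits nontrivially over $\bar\F_q$ and $f_1=P(g_1)Q(g_1)$ is a genuine factorization. Your route is a bit more self-contained, since it does not invoke the group structure of $\GA_n(\F_q)^{(q)}$ a second time; the paper's route has the minor aesthetic advantage of never leaving the ground field $\F_q$.
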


\begin{proof}[Proof of the equivalence of {\it 2.}\,and {\it 3.}]
$f_1$ is a coordinate in $(\F_q[X]^{(q)})^n$ if and only if $(\delta_1(f_1),\ldots\!,\delta_n(f_1))$
is a row that is extendible to a matrix in $\GL_n(\F_q[t])$ if and only if
$(\delta_1(f_1),\ldots\!,\delta_n(f_1))=(1)$ in $\F_q[t]$. (We use Remark~\ref{1to1} again.)
\end{proof}\

\noindent From this we obtain the remarkable fact (Corollary~\ref{polycoord})
that all prime power polynomials are essentially univariate ({\it i.e.},
up to a polynomial transformation). This fact in turn will help us complete the proof
of Proposition~\ref{unimodular}. \\

\begin{cor} \label{polycoord}
Every element of $\F_q[X]^{(q)}$ is a linearized polynomial in a coordinate of an automorphism in $(\F_q[X]^{(q)})^n$.
\end{cor}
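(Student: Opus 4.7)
The plan is to use the $q$-Jacobian correspondence to reduce the statement to a one-line matrix factorization in $\M_{1\times n}(\F_q[t])$. Given $f\in\F_q[X]^{(q)}$, its $q$-Jacobian is a row vector $\J_q(f)=(\delta_1(f),\ldots,\delta_n(f))\in\M_{1\times n}(\F_q[t])$. I want to produce a coordinate $h\in\F_q[X]^{(q)}$ and a univariate $p\in\F_q[Y]^{(q)}$ such that $f=p\circ h$; by Theorem~\ref{corresp2} and Remark~\ref{1to1}, this is equivalent to factoring the row $\J_q(f)$ as a scalar times a unimodular row.

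Concretely, I would set $d:=\gcd(\delta_1(f),\ldots,\delta_n(f))$ in the PID $\F_q[t]$ and write $\delta_i(f)=d\,c_i$ with $\gcd(c_1,\ldots,c_n)=1$. Using Remark~\ref{1to1}, let $h\in\F_q[X]^{(q)}$ be the unique linearized polynomial with $\delta_i(h)=c_i$ for $i=1,\ldots,n$, and let $p\in\F_q[Y]^{(q)}$ (with $Y$ a single new variable) be the unique univariate linearized polynomial whose $q$-Jacobian entry equals $d$. Since the ideal generated by the $\delta_i(h)$ is the unit ideal, the equivalence of $(2)$ and $(3)$ in Proposition~\ref{unimodular} (already proved in the part of the proof provided) shows that $h$ is a coordinate of an automorphism in $(\F_q[X]^{(q)})^n$.

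It then remains to check that $f=p\circ h$. By Theorem~\ref{corresp2},
\[
\J_q(p\circ h)=\J_q(p)\cdot\J_q(h)=(d)\cdot(c_1,\ldots,c_n)=(d c_1,\ldots,d c_n)=\J_q(f),
\]
and the injectivity of $\J_q$ from Remark~\ref{1to1} gives $p\circ h=f$, which is exactly the statement that $f$ is a linearized polynomial in the coordinate $h$.

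There is no real obstacle here: the whole argument is a one-step matrix factorization, and all the heavy lifting (the ring-to-matrix dictionary, and the unimodular-row criterion for being a coordinate) has already been done. The only thing to be careful about is the direction of composition — one must apply the univariate $p$ \emph{outside} the multivariate coordinate $h$, so that the $q$-Jacobian multiplication $(1{\times}1)\cdot(1{\times}n)=(1{\times}n)$ lines up with $\J_q(f)$.
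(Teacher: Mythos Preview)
Your proof is correct and is essentially identical to the paper's own argument: the paper also factors the row $\J_q(f)$ by pulling out $\gcd(\delta_1(f),\ldots,\delta_n(f))$, lifts the gcd to a univariate linearized polynomial and the resulting unimodular row to a multivariate one via Remark~\ref{1to1}, invokes the already-proved equivalence ${\it 2.}\Leftrightarrow{\it 3.}$ of Proposition~\ref{unimodular} to see the latter is a coordinate, and uses Theorem~\ref{corresp2} together with injectivity of $\J_q$ to conclude. The only differences are names of variables.
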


\begin{proof}
Let $f \in \F_q[X]^{(q)}$, and $h:=\gcd(\delta_1(f),\ldots\!,\delta_n(f)) \in \F_q[t]$
(unique if we assume $h$ to be a monic polynomial).
Then we have $(\delta_1(f),\ldots\!,\delta_n(f))=(h)$ (as ideals in $\F_q[t]$),
and we can write $\delta_i(f)=hg_i$ with $g_1,\ldots\!,g_n \in \F_q[t]$.
Now let $\tilde{f_i} \in \F_q[X_i]^{(q)}\ (i=1,\ldots\!,n)$ and
$\tilde{h} \in \F_q[X_1]^{(q)}$ such that $\delta_i(\tilde{f_i})=g_i(t)$
and $\J_q(\tilde{h})=h(t)$ (using Remark~\ref{1to1} again). Then $\tilde{f}:=\tilde{f_1}+\cdots+\tilde{f_n}$ gives
$$
\J_q(f)=(\delta_1(f)\ \cdots\ \delta_n(f))=(hg_1\ \cdots\ hg_n)=h\cdot(g_1\ \cdots\ g_n)=
\J_q(\tilde{h})\J_q(\tilde{f})=\J_q(\tilde{h}(\tilde{f}))
$$
Hence, $f=\tilde{h}(\tilde{f})$, and
$(\delta_1(\tilde{f}),\ldots\!,\delta_n(\tilde{f}))=(g_1,\ldots\!,g_n)=(1)$ in $\F_q[t]$,
so $\tilde{f}$ is a coordinate by the equivalence of {\it 2.} and {\it 3.} in Proposition~\ref{unimodular}.
\end{proof}\

\begin{proof}[Proof of the equivalence of {\it 1.}\,and {\it 2.}(Proposition~\ref{unimodular})]
The only nontrivial implication is {\it 1.}$\,\Rightarrow\,${\it 2.}, so assume that
$f_1$ is a coordinate of an automorphism in $\F_q[X]^n$. By Corollary~\ref{polycoord},
$f_1=g_1(h_1)$ with $g_1\in\F_q[X_1]^{(q)}$ and $h_1\in\F_q[X]^{(q)}$, and such
that $h_1$ is the first coordinate of an automorphism in $(\F_q[X]^{(q)})^n$.
Applying the inverse of this automorphism to $f_1$, we deduce that
$g_1(X_1)$ is a coordinate as well. Just as in the proof of Theorem~\ref{diagonalize}, 
this implies that $g_1$ has degree 1, say $g_1(X_1)=aX_1+b$ with $a\in\F_q^{\,*}$ and $b\in\F_q$. 
But $g_1\in\F_q[X_1]^{(q)}$, so $b=0$. 
Now $f_1=ah_1$ is the first coordinate of an automorphism in $(\F_q[X]^{(q)})^n$.
\end{proof}\

\noindent One can write down many coordinates over finite fields of {\it such} a form,
that they can't possibly be coordinates when considered over a field of characteristic zero.
This is illustrated in the following example. A polynomial as described there, {\it i.e.}
of the form $\tilde{f}:=f(X)+Y^{q^n}$, can only be a coordinate over a characteristic zero field $K$
in the trivial cases $n=0$ or $f$ has degree 1 (as will follow from Proposition~\ref{coordrecognnonmixed}).\\

\begin{example}\label{nontrivial}
Any element of $\F_q[X,Y]^{(q)}$ (two variables) of the form $f(X)+Y^{q^n}$,
with $n\geq0$ and linear part of $f$ equal to $X$, is a coordinate.
Namely, let $g(X):=f(X)-X\in\F_q[X]^{(q)}$. Note that $g(X)=h(X)^q$ for some $h\in\F_q[X]^{(q)}$
(for $g$ contains no linear term), whence
$\hat{g}(t):=\delta_1(g(X))=\delta_1(X^q)\delta_1(h(X))=t\hat{h}(t)$, where $\hat{h}(t):=\delta_1(h(X))$. Thus,
$$
\left(\begin{array}{cc} 1+\hat{g}(t) & t^n\\
(-1)^{n+1}\hat{h}(t)^n &
\tfrac{1-(-\hat{g}(t))^n}{1-(-\hat{g}(t))}\end{array}\right) \in
\GL_2(\F_q[t])
$$
Note that the lower right entry is indeed an element of $\F_q[t]$: it equals 
the finite geometric series $1-\hat{g}(t)+\hat{g}(t)^2-\cdots+(-1)^{n-1}\hat{g}(t)^{n-1}$.
From the above we obtain
$$
(f(X)+Y^{q^n}, (-1)^{n+1}h(X)^{(n)}+\sum_{k=0}^{n-1}(-1)^kg(Y)^{(k)})
\in \GA_2(\F_q)
$$
where each exponent ``$(k)$'' of a polynomial denotes $k$-fold composition of that
polynomial with itself (and $g(Y)^{(0)}:=Y$). In particular, $h(X):=X^{q^{m-1}}$ ($m\geq1$) gives
$$
(X+X^{q^m}+Y^{q^n}, (-1)^{n+1}X^{q^{(m-1)n}}+\sum_{k=0}^{n-1}(-1)^kY^{q^{km}})
\in \GA_2(\F_q)
$$
Assuming $m,n\geq2$, and writing $n=rm+s$ with $r\in\N$ and $0\leq s\leq m-1$, we can also
complete this automorphism using a polynomial of lower degree. Namely,
$$
(X+X^{q^m}+Y^{q^n}, (-1)^rX^{q^{m-s}}+\sum_{k=0}^r(-1)^kY^{q^{km}})
\in \GA_2(\F_q)
$$
since
$$
\left(\begin{array}{cc} 1+t^m & t^n\\
(-1)^rt^{m-s} & \tfrac{1-(-t^m)^{r+1}}{1-(-t^m)}\end{array}\right)
\in \GL_2(\F_q[t])
$$
\end{example}\

\subsection{Polynomial Ring Recognition Problem and Abhyankar-Sathaye Conjecture}\

\noindent A finitely generated $K$-algebra $A$ can be represented as
$A=K[X]/I$, where $I$ is an ideal of $K[X]$. A necessary condition
for being a polynomial ring over $K$ is that $A$ is a domain, whence
$I$ must be a prime ideal. If $K$ is a finite field and $I$ is
generated by linearized polynomials, we will show that the condition
of being a domain is actually also sufficient
(Corollary~\ref{PRRP}). This differs significantly from the
characteristic zero case, which has only been solved in case $X$
represents at most two variables. We will first summarize the
results of this case.

To begin, if $I$ is even a maximal ideal, then $K[X]/I$ is a field, which is
of course only a polynomial ring over $K$ if it equals $K$ (the
units of both fields must coincide). In other words, the canonical
embedding $K\to K[X]/I$ is actually an isomorphism. In this case,
choosing $a_1,\ldots\!,a_n\in K$ such that $X_i-a_i\in I$ for all
$i$ (which exist since the embedding is onto),
we get that $I=(X_1-a_1,\ldots\!,X_n-a_n)$. So in case of a maximal ideal $I$,
$A$ is a polynomial ring if and only if $I$ is of this form.

This also solves the general case $n=1$, since any nonzero prime
ideal of $K[X]$ is then maximal. And in the case of two variables,
any non-maximal, nonzero prime ideal of $K[X]$ is generated by one
irreducible polynomial (since $K[X]$ is a factorial ring). Hence,
the following result, proved by Abhyankar and Moh in \cite{AM} and
independently by Suzuki in \cite{Suz}, completes the solution of the
two-variable Polynomial Ring Recognition Problem over a field of
characteristic zero.\\

\begin{thm}[Abhyankar-Moh-Suzuki]\label{AMS}
Let $K$ a field with $\Char(K)=0$. If a polynomial $f_1 \in K[X,Y]$
satisfies $K[X,Y]/(f_1) \cong_KK[Z]$, then $f_1$ is a coordinate.
\end{thm}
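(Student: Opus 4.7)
The plan is to follow the classical Abhyankar--Moh--Suzuki strategy, whose crux is a degree estimate on generating pairs of $K[Z]$. First I would translate the hypothesis into explicit generators of $K[Z]$: the isomorphism $K[X,Y]/(f_1)\cong_K K[Z]$ yields polynomials $X(Z),Y(Z)\in K[Z]$ such that the induced $K$-algebra map $\phi:K[X,Y]\to K[Z]$, $X\mapsto X(Z)$, $Y\mapsto Y(Z)$, is surjective with kernel $(f_1)$. In particular $X(Z)$ and $Y(Z)$ generate $K[Z]$ as a $K$-algebra, and $f_1$ is irreducible.

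The essential input is the \emph{Abhyankar--Moh epimorphism theorem}: for $\Char(K)=0$, if two nonconstant polynomials $p(Z),q(Z)\in K[Z]$ generate $K[Z]$ over $K$, then $\deg p$ divides $\deg q$ or $\deg q$ divides $\deg p$. I would take this as a black box, since its proof (via approximate roots of $f_1$, or equivalently Newton--Puiseux expansions of a branch at infinity of the curve $\{f_1=0\}$) is the real substance of the result and is wholly independent of the linearized-polynomial machinery developed in the paper.

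Granted this, the argument proceeds by descent on $\max(\deg X(Z),\deg Y(Z))$. If, say, $\deg Y(Z)=n$ divides $\deg X(Z)=nk$ with respective leading coefficients $a$ and $b$, then the elementary automorphism $\tau:X\mapsto X-(a/b^k)Y^k$, $Y\mapsto Y$ of $K[X,Y]$ replaces $(\phi(X),\phi(Y))$ by $(X(Z)-(a/b^k)Y(Z)^k,Y(Z))$, still a generating pair but with strictly smaller first-component degree. After finitely many such tame steps, assembled into some $\sigma^{-1}\in\TA_2(K)$, the new pair $(\phi(\sigma^{-1}(X)),\phi(\sigma^{-1}(Y)))$ has one component constant, say equal to $c\in K$; then the other component, alone generating $K[Z]$, must have the form $\alpha Z+\beta$ with $\alpha\in K^*$.

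The kernel of $\phi\circ\sigma^{-1}:K[X,Y]\to K[Z]$ is therefore $(X-c)$; but it also equals $\sigma((f_1))=(\sigma(f_1))$, so $\sigma(f_1)=u(X-c)$ for some $u\in K^*$. Since $X-c$ is evidently a coordinate and the property of being a coordinate is preserved under automorphisms of $K[X,Y]$, we conclude that $f_1=u\,\sigma^{-1}(X-c)$ is a coordinate. The hard part is plainly the epimorphism theorem itself, which genuinely requires $\Char(K)=0$ (its naive analogue fails in positive characteristic) and whose proof I would cite rather than reproduce.
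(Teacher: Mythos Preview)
Your outline is correct and is essentially the standard reduction of the Abhyankar--Moh--Suzuki theorem to the epimorphism theorem (divisibility of degrees for a generating pair of $K[Z]$), followed by a tame descent; the one point you should make explicit is the degenerate case where one of $X(Z),Y(Z)$ is already constant, which you only reach at the end of the descent but which could also occur at the outset.

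However, note that the paper does \emph{not} give its own proof of this theorem at all: Theorem~\ref{AMS} is merely quoted, with attribution to Abhyankar--Moh \cite{AM} and Suzuki \cite{Suz}, as background for the discussion of the Polynomial Ring Recognition Problem. So there is nothing to compare against; you have supplied a proof sketch where the paper simply cites the literature. Your decision to take the epimorphism theorem as a black box is exactly in that spirit, since the substantive content (approximate roots, or analysis at infinity) lies in the cited references and is orthogonal to the linearized-polynomial framework of the paper.
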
\

\noindent Contrary to the characteristic zero case, several counterexamples 
to Theorem~\ref{AMS} have been found in characteristic $p>0$. 
Here is one which was also mentioned in~\cite{Nagata}.\\

\begin{example}\label{AMS-p}
Take any prime number $p>2$, and let $f_1:=Y^{p^2}-X^{2p}-X$. Then
$\F_p[X,Y]/(f_1) \cong \F_p[T]$, where $T$ is a variable; this
isomorphism is induced by $\varphi:\ \F_p[X,Y]\to \F_p[T], \ X
\mapsto T^{p^2}, Y \mapsto T^{2p}+T$. But we claim that $f_1$ is not a
coordinate.

First, note that $\varphi$ is indeed surjective since
$f_2:=Y-(Y^p-X^2)^2$ satisfies $\varphi(f_2)=T$. Now we show that
$\Ker(\varphi)=(f_1)$. Since the (Krull) dimensions of $\F_p[X,Y]$
and $\F_p[T]$ are equal to 2 resp.\,1, $\Ker(\varphi)$ must be a
height 1 prime ideal, and thus a principal ideal due to the
factoriality of $\F_p[X,Y]$. So it suffices to show that $f_1$ is
irreducible over $\F_p$. So let $\beta$ be an element of an
extension field of $K:=\F_p(X)$ such that
$\beta^{p^2}=\alpha:=X^{2p}+X$. Then
$f_1=Y^{p^2}-\alpha=(Y-\beta)^{p^2}$ over $K(\beta)$. Let $1\leq
m\leq p^2$ be minimal such that $(Y-\beta)^m\in K[Y]$. Then
$(Y-\beta)^m$ is irreducible over $K$, and in fact the only
irreducible factor of $f_1$ (since two positive powers of $Y-\beta$
cannot be coprime). Hence $f_1$ is a power of $(Y-\beta)^m$, and
$m\mid p^2$. Now suppose $m=1$ or $m=p$. Then
$Y^p-\beta^p=(Y-\beta)^p\in K[Y]$, so $\alpha=(\beta^p)^p\in
K^p=\F_p(X^p)$, a contradiction. As a result, $m=p^2$, and the
conclusion is that $f_1$ is irreducible over $K$.

Now suppose $f_1$ is a coordinate. Using the fact that $\F_p$ is a
field, Corollary~5.1.6 in \cite{Essen} yields an $f_2' \in \F_p[X,Y]$ 
with $\deg(f_2')<\deg(f_1)$ and $(f_1,f_2') \in\GA_2(\F_p)$. 
(Here ``$\deg$'' denotes the (total) degree of a polynomial.)
Since $\F_p[X,Y]/(f_1)=\F_p[f_1,f_2']/(f_1)=\F_p[\ov{f_2'}]$ 
($\ov{f_2'}$ being the equivalence class of $f_2'$ modulo $(f_1)$), 
we must have $\F_p[\varphi(f_2')]=\F_p[T]$, whence $\varphi(f_2')=aT+b$ with $a
\in \F_p^*,b \in \F_p$. But $\varphi(f_2)=T$, which implies that
$f_2'-af_2-b \in \Ker(\varphi)=(f_1)$. From
$\deg(f_2'-af_2-b)<\deg(f_1)$ we now conclude that $f_2'-af_2-b=0$.
Thus, $(f_1,f_2) \in \GA_2(\F_p)$. But according to Corollary~5.1.6 in
\cite{Essen} either $\deg(f_1)\mid\deg(f_2)$ or $\deg(f_2)\mid\deg(f_1)$, 
contradicting the fact that $\deg(f_1)=p^2$ and $\deg(f_2)=2p$.
\end{example}\

\noindent Theorem~\ref{PPP ideal} is the key to the solution of the
Polynomial Ring Recognition Problem for $\F_q$-algebras which are
defined by linearized polynomials (Corollary~\ref{PRRP}).\\

\begin{thm} \label{PPP ideal}
Let $\p$ be a prime ideal in $\F_q[X]$ generated by linearized polynomials.
Then these polynomials can be chosen in such a way that together they are extendible
to an automorphism in $\GA_n(\F_q)^{(q)}$.

More generally, let $\a$ be any ideal in $\F_q[X]$ generated by linearized polynomials.
Then there exist $h=(h_1,\ldots\!,h_n)\in\GA_n(\F_q)^{(q)}$, $r\leq n$ and
$g_i\in\F_q[X_i]^{(q)}\backslash\{0\}$ for $i=1,\ldots\!,r$,
such that $\a=(g_1(h_1),\ldots\!,g_r(h_r))$.
\end{thm}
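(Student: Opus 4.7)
The plan is to lift the problem to matrix theory over the Euclidean domain $\F_q[t]$ via the isomorphism of Theorem~\ref{corresp2} and Remark~\ref{1to1}, apply Smith normal form, and translate back to ideals.

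For the general statement, pick linearized generators $f_1,\ldots,f_k$ of $\a$ and form $A:=\J_q(f_1,\ldots,f_k)\in\M_{k\times n}(\F_q[t])$. Since $\F_q[t]$ is Euclidean there exist $M\in\GL_k(\F_q[t])$ and $N\in\GL_n(\F_q[t])$ with $MAN=D$ diagonal, whose nonzero entries $d_1,\ldots,d_r$ sit in positions $(i,i)$ for $i\leq r$. Let $\phi\in\GA_k(\F_q)^{(q)}$ correspond to $M$ and let $h\in\GA_n(\F_q)^{(q)}$ satisfy $\J_q(h^{-1})=N$, both via Corollary~\ref{JCPPP}. Setting $\wt f:=\phi\circ(f\circ h^{-1})\in(\F_q[X]^{(q)})^k$, Theorem~\ref{corresp2} gives $\J_q(\wt f)=MAN=D$; reading off rows, $\wt f_i=g_i\in\F_q[X_i]^{(q)}\setminus\{0\}$ for $i\leq r$ and $\wt f_i=0$ for $i>r$. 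On the other hand, each $\phi_i$ is an $\F_q$-linear combination of $q^m$-th powers of its arguments, so every $\wt f_i$ lies in the ideal $(f_1\circ h^{-1},\ldots,f_k\circ h^{-1})$, which is the image of $\a$ under the ring automorphism $\sigma_{h^{-1}}:\F_q[X]\to\F_q[X]$ sending $X_i$ to $(h^{-1})_i$; applying $\phi^{-1}$ gives the reverse inclusion. Hence $\sigma_{h^{-1}}(\a)=(g_1(X_1),\ldots,g_r(X_r))$, and applying the inverse ring automorphism $\sigma_h$ yields $\a=(g_1(h_1),\ldots,g_r(h_r))$.

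For the prime statement, assume $\p$ is prime and write $\p=(g_1(h_1),\ldots,g_r(h_r))$ as above; the goal is to show $\p=\b:=(h_1,\ldots,h_r)$, since the generators of $\b$ obviously extend to $(h_1,\ldots,h_n)\in\GA_n(\F_q)^{(q)}$. Because every $g\in\F_q[X_i]^{(q)}$ has zero constant term, $g_i(h_i)\in(h_i)\subseteq\b$, giving $\p\subseteq\b$. For the reverse, $\sigma_{h^{-1}}$ identifies $\b$ with $(X_1,\ldots,X_r)$, of height $r$, and identifies $\p$ with $(g_1(X_1),\ldots,g_r(X_r))$, whose quotient $\F_q[X_1]/(g_1)\otimes_{\F_q}\cdots\otimes_{\F_q}\F_q[X_r]/(g_r)\otimes_{\F_q}\F_q[X_{r+1},\ldots,X_n]$ is a free module of finite rank over the polynomial ring $\F_q[X_{r+1},\ldots,X_n]$ and hence has Krull dimension $n-r$. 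By the catenary property of $\F_q[X]$, $\hgt(\p)=n-\dim(\F_q[X]/\p)=r$; a strict containment $\p\subsetneq\b$ would give $\hgt(\b)\geq\hgt(\p)+1=r+1$, contradicting $\hgt(\b)=r$. Therefore $\p=\b$.

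The main obstacle is the prime case, namely the height computation $\hgt(\p)=r$: it requires each $g_i\neq 0$ (granted by Smith normal form) so that the finite-field factors $\F_q[X_i]/(g_i)$ are Artinian, together with the catenary property of the polynomial ring. The rest is bookkeeping: tracking that left multiplication of $A$ by $M\in\GL_k(\F_q[t])$ corresponds to modifying generators of $\a$ (preserving the ideal), while right multiplication by $N\in\GL_n(\F_q[t])$ corresponds to applying the ring automorphism $\sigma_{h^{-1}}$ of $\F_q[X]$.
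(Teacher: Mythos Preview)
Your argument for the general statement is essentially the paper's: pass to $\M_{k\times n}(\F_q[t])$ via $\J_q$, apply Smith normal form, and observe that left multiplication by $M$ preserves the ideal (since linearized polynomials have zero constant term) while right multiplication by $N$ corresponds to a ring automorphism of $\F_q[X]$. One small omission: you should say a word about why $\a$ admits a \emph{finite} set of linearized generators (the paper handles this explicitly via Noetherianity of $\F_q[X]$, rewriting each of finitely many general generators in terms of finitely many of the given linearized ones).

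For the prime statement your route is genuinely different. The paper argues elementarily: after reducing to $\p=(g_1(X_1),\ldots,g_r(X_r))$, it factors $g_i(X_i)=X_i^{e_i}\tilde g_i(X_i)$ with $\tilde g_i(0)\neq 0$, notes $\tilde g_i\notin\p$ (since every element of $\p$ vanishes at $0$), deduces $X_i\in\p$ by primality, and then substitutes $X_j=0$ for $j\neq i$ to force $e_i=1$ and $\tilde g_i\in\F_q^*$. You instead compute heights: $\F_q[X]/\p$ is module-finite over $\F_q[X_{r+1},\ldots,X_n]$, hence of dimension $n-r$, so $\hgt(\p)=r=\hgt(\b)$, and the catenary property of $\F_q[X]$ forbids $\p\subsetneq\b$. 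Both are correct; the paper's approach is more elementary and self-contained (no dimension theory), while yours is cleaner structurally and would generalize more readily to situations where one cannot factor by hand.
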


\begin{proof}
We first derive the first statement from the second one. Given $\p$,
let $h$ and $g_1,\ldots\!,g_r\neq0$ as in the second statement such
that $\p=(g_1(h_1),\ldots\!,g_r(h_r))$. Applying $h^{-1}$ to $\p$,
we may even assume that $\p=(g_1(X_1),\ldots\!,g_r(X_r))$. For $i
\in \{1,\ldots\!,r\}$ we write $g_i(X_i)=X_i^{e_i}\tilde{g_i}(X_i)$
with $\tilde{g_i} \in \F_q[X_i]$, $\tilde{g_i}(0)\neq0$ and $e_i>0$
(note that $g_i\in\F_q[X_i]^{(q)}$, so indeed $g_i\in(X_i)$). Since
$g_0(0)=0$ for all $g_0 \in \p$, we must have
$\tilde{g_i}(X_i)\notin\p$, whence $X_i \in\p$ (since $\p$ is a
prime ideal). Substituting $X_j:=0$ for all $j\neq i$, we obtain
$X_i\in(X_i^{e_i}\tilde{g_i}(X_i))$. This implies that $e_i=1$ and
$\tilde{g_i}(X_i) \in \F_q^*$. Consequently,
$\p=(X_1,\ldots\!,X_r)$.

Now we prove the second statement. First note that $\a$ is generated by
{\it finitely many} linearized polynomials. Namely, $\a$ is generated by
finitely many general polynomials (since $\a$ is an ideal in a Noetherian ring),
and each of these general polynomials can be written as an $\F_q[X]$-linear
combination of finitely many of the linearized polynomials that generate $\a$. 
These together form the announced finite generating set.

So let $\a=(f_1,\ldots\!,f_m)$ for some $m \in \N$ and
$f_1,\ldots\!,f_m\in\F_q[X]^{(q)}$.
By Theorem~\ref{diagonalize}, there exist $h\in\TA_n(\F_q)^{(q)}$ and
$\tilde{h}\in\TA_m(\F_q)^{(q)}$ such that $g:=\tilde{h}fh^{-1}$ has the form
$g=(g_1,\ldots\!,g_m)$, where $g_i \in \F_q[X_i]^{(q)}\,$ for all
$i$ (and $g_i=0$ if $m>n$ and $n<i\leq m$).
Modifying $h$ and $\tilde{h}$ by a suitable permutation of the
variables, we may assume that $g_1,\ldots\!,g_r\neq0$ and
$g_{r+1}=\cdots=g_m=0$ for some $0\leq r\leq \min\{m,n\}$.
Since
\begin{eqnarray*}
(\tilde{h}_1(f),\ldots\!,\tilde{h}_m(f))\ \ =\ \ ((\tilde{h}f)_1,\ldots\!,(\tilde{h}f)_m) & = &
((gh)_1,\ldots\!,(gh)_m)\\
 & = & (g_1(h_1),\ldots\!,g_r(h_r))
\end{eqnarray*}
we are done as soon as we show that $\a=(\tilde{h}_1(f),\ldots\!,\tilde{h}_m(f))$.
Well then, we have $\tilde{h}_i(0)=0$ for all $i$, whence
$(\tilde{h}_1(f),\ldots\!,\tilde{h}_m(f))\subseteq(f_1,\ldots\!,f_m)$.
Likewise,
\begin{eqnarray*}
(f_1,\ldots\!,f_m) & = & \Big(\big(\tilde{h}^{-1}\big)_1\big(\tilde{h}_1(f),\ldots\!,\tilde{h}_m(f)\big),\ldots\!,
\big(\tilde{h}^{-1}\big)_m\big(\tilde{h}_1(f),\ldots\!,\tilde{h}_m(f)\big)\Big)\\
 & \subseteq & (\tilde{h}_1(f),\ldots\!,\tilde{h}_m(f))
\end{eqnarray*}
and thus $\a=(f_1,\ldots\!,f_m)=(\tilde{h}_1(f),\ldots\!,\tilde{h}_m(f))$.
\end{proof}\

\begin{cor} \label{PRRP}
Let $A=\F_q[X]/I$ be a finitely generated $\F_q$-algebra, where $I$ is an ideal
in $\F_q[X]$ generated by linearized polynomials. Then $A$ is (isomorphic to)
a polynomial ring over $\F_q$ if and only if $A$ is a domain.
\end{cor}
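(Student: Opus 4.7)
The plan is to invoke the first part of Theorem~\ref{PPP ideal} as a black box, which makes the corollary essentially immediate. First I would handle the forward direction in one line: any polynomial ring over a field is a domain, so if $A$ is (isomorphic to) a polynomial ring over $\F_q$, then $A$ is a domain.

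For the converse, I would assume $A$ is a domain, so that $I$ is a prime ideal of $\F_q[X]$ generated by linearized polynomials. The stronger first statement of Theorem~\ref{PPP ideal} then applies: one may choose linearized generators $f_1, \ldots, f_r$ of $I$ that appear as the first $r$ components of some automorphism $\phi = (f_1, \ldots, f_n) \in \GA_n(\F_q)^{(q)}$. Viewing $\phi$ as the $\F_q$-algebra automorphism of $\F_q[X]$ sending $X_i \mapsto f_i$, it takes the ideal $(X_1, \ldots, X_r)$ bijectively onto $I = (f_1, \ldots, f_r)$, and hence descends to an $\F_q$-algebra isomorphism
$$
\F_q[X_{r+1}, \ldots, X_n] \;\cong\; \F_q[X]/(X_1, \ldots, X_r) \;\cong\; \F_q[X]/I \;=\; A,
$$
exhibiting $A$ as a polynomial ring over $\F_q$.

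There is essentially no obstacle here: all the real work has already been absorbed into Theorem~\ref{PPP ideal} (which in turn rests on the diagonalization of Theorem~\ref{diagonalize}). The only point that requires care is to use the \emph{prime}-ideal conclusion of Theorem~\ref{PPP ideal}, not merely its general-ideal form: the latter would only give $A \cong \F_q[X]/(g_1(X_1), \ldots, g_r(X_r))$, and for nontrivial $g_i \in \F_q[X_i]^{(q)}$ the quotient is typically not a polynomial ring over $\F_q$ (it may acquire nilpotents, or proper finite extensions of $\F_q$).
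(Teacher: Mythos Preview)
Your argument is correct and is exactly the intended one: the paper states Corollary~\ref{PRRP} without proof, immediately after Theorem~\ref{PPP ideal}, treating it as a direct consequence of the prime-ideal part of that theorem. Your observation that one must use the prime-ideal statement (and not merely the general-ideal form) is precisely the point.
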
\

\noindent Theorem~\ref{AMS} relates the Polynomial Ring Recognition
Problem to the Coordinate Recognition Problem for the case of two
variables. But this connection is in fact more general. Namely, it
is easily seen, that if $f_1 \in K[X_1,\ldots\!,X_n]$ is a coordinate,
then the $K$-algebra $K[X_1,\ldots\!,X_n]/(f_1)$ is a polynomial ring
over $K$ in $n-1$ variables. The reverse statement is the
Abhyankar-Sathaye Conjecture, which in case $n=2$ has an affirmative
answer by Theorem~\ref{AMS}. Although the Abhyankar-Sathaye Conjecture
is false in nonzero characteristic in general (as shown in Example~\ref{AMS-p}),
the statement holds for linearized polynomials:\\

\begin{thm}\label{ASCq}
If $f_1 \in \F_q[X]^{(q)}$ satisfies $\,\F_q[X]/(f_1) \cong_{\,\F_q}\F_q[Y_1,\ldots\!,Y_{n-1}]$, then $f_1$ is a coordinate.
\end{thm}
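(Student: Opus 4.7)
The plan is to deduce this from Theorem~\ref{PPP ideal}, which is the central structural result already available. The hypothesis $\F_q[X]/(f_1)\cong_{\F_q}\F_q[Y_1,\ldots,Y_{n-1}]$ makes the quotient a domain, so the principal ideal $(f_1)$ is prime. Since $(f_1)$ is generated by the linearized polynomial $f_1$, both statements of Theorem~\ref{PPP ideal} apply.

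From the proof of that theorem we extract the concrete form of the conclusion: there exists $h=(h_1,\ldots,h_n)\in\GA_n(\F_q)^{(q)}$ such that after applying the ring automorphism induced by $h^{-1}$, the ideal $(f_1)$ is transformed into $(X_1,\ldots,X_r)$ for some $r\leq n$. Now the heart of the argument is to pin down $r=1$. Principalness does the job: ring automorphisms send principal ideals to principal ideals, and $(X_1,\ldots,X_r)$ is principal only when $r\leq1$, while $r\geq1$ is forced by $f_1\neq0$. (A Krull dimension count $n-r=\dim\F_q[X_{r+1},\ldots,X_n]=\dim\F_q[Y_1,\ldots,Y_{n-1}]=n-1$ serves as a sanity check and yields the same conclusion.)

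With $r=1$, we have $f_1\circ h^{-1}=c\,X_1$ for some $c\in\F_q^{\,*}$, so composing with $h$ on the right gives $f_1=c\,h_1$. Since $h_1$ is the first coordinate of the automorphism $h\in\GA_n(\F_q)^{(q)}$, the tuple $(c\,h_1,h_2,\ldots,h_n)\in\GA_n(\F_q)^{(q)}$ (obtained by composing $h$ on the left with the linear automorphism scaling the first variable by $c$) witnesses $f_1=c\,h_1$ as a coordinate. The main conceptual work has already been packaged into Theorem~\ref{PPP ideal}, so I do not anticipate a serious obstacle; what remains is only the short principalness/dimension bookkeeping just outlined.
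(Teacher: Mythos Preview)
Your argument is correct, but it follows a different route from the paper's. The paper instead invokes Corollary~\ref{polycoord}: any linearized polynomial factors as $f_1=g_1(h_1)$ with $h_1$ a linearized coordinate and $g_1\in\F_q[X_1]^{(q)}$; since $g_1(0)=0$ one has $h_1\mid f_1$, and primality of $(f_1)$ (equivalently, irreducibility of the nonzero element $f_1$) forces $f_1=ch_1$ with $c\in\F_q^{\,*}$. You instead feed the prime ideal $(f_1)$ directly into Theorem~\ref{PPP ideal} and use principalness to pin down $r=1$. Both routes ultimately rest on the diagonalization of Theorem~\ref{diagonalize}; the paper's version goes through the lighter single-polynomial corollary and a divisibility argument, while yours appeals to the heavier ideal-theoretic statement but avoids the factorization step and the explicit use of irreducibility. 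One minor stylistic point: you need not cite the \emph{proof} of Theorem~\ref{PPP ideal}, since its first statement already hands you generators $h_1,\ldots,h_r$ of $(f_1)$ extending to an automorphism $h\in\GA_n(\F_q)^{(q)}$, and then the ring automorphism associated to $h^{-1}$ carries $(f_1)$ to $(X_1,\ldots,X_r)$.
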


\begin{proof}
According to Corollary~\ref{polycoord}, $f_1=g_1(h_1)$, where $h_1$ is a
coordinate in $\F_q[X]^{(q)}$ and $g_1 \in \F_q[X_1]^{(q)}$. Then
$g_1(0)=0$, so $h_1$ divides $f_1$. Additionally, $(f_1)$ is a prime
ideal (as $\F_q[Y_1,\ldots\!,Y_{n-1}]$ is a domain), whence $f_1=ch_1$ for some $c
\in \F_q^*$. Thus, $f_1$ is a coordinate.
\end{proof}\

\subsection{Linearization Conjecture}\

\noindent The Linearization Conjecture doesn't hold in general in positive characteristic,
which is demonstrated in the following example. Throughout this section, 
$X$ (and also $Y$) denotes one variable.\\

\begin{example}
$f:=(X+Y^2,Y)\in\GA_2(\F_2)$ has order 2, but is not linearizable.
This already follows from two obvious facts about $f$: its linear
part equals the identity, and $f(0)=0$. Namely, suppose $g\in\GA_2(\F_2)$ 
such that $gfg^{-1}=l\in\GL_2(\F_2)$, and let $c:=g(0)$.
Then $\tilde{g}:=(X-c_1,Y-c_2)\circ g$ satisfies $\tilde{g}(0)=0$, and
\begin{equation}\label{linearization}
\tilde{g}f\tilde{g}^{-1} = (X-c_1,Y-c_2)l(X+c_1,Y+c_2)
\end{equation}
Since $f$ and $\tilde{g}$ have zero constant part, we can find 
the linear part of the lefthandside of (\ref{linearization}) 
by composing the linear parts of the factors of this composition. 
Hence, the linear part of the lefthandside equals the identity. 
Looking at the righthandside of (\ref{linearization}), 
we conclude that $l=(X,Y)$. But then also $f=(X,Y)$, a contradiction.
\end{example}\

\noindent In view of this example, a question arises: is the
Linearization Conjecture true in nonzero characteristic if we
additionally assume that the characteristic doesn't divide the order
of the automorphism? For linearized polynomial maps, this question
has an affirmative answer (Corollary~\ref{linearizable}). Because of
Theorem~\ref{corresp2}, the proof of this fact involves matrices in
$\GL_n(K[t])$
satisfying a polynomial relation over $K$.\\

\begin{lemma}\label{K-relation}
Let $R$ be a domain containing a field $K$, such that $K$ is
integrally closed in $L$, the field of fractions of $R$.
Furthermore, let $h(X)\in R[X]$ be the characteristic polynomial of
a given $A \in \GL_n(R)$, and $g(X)\in L[X]$ the minimal polynomial
of $A$ over $L$. Suppose $f(A)=0$ for some $f(X)\in K[X]$.
Then also $g(X), h(X)\in K[X]$.
\end{lemma}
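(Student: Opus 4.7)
The plan is to exploit the hypothesis that $K$ is integrally closed in $L$ by showing that both $g$ and $h$ have roots (in an algebraic closure of $L$) that are integral over $K$, so that their coefficients, which lie in $L$, are forced into $K$.

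First I would fix an algebraic closure $\overline{L}$ of $L$ and work with roots in $\overline{L}$. Because $g$ is the minimal polynomial of $A$ over $L$ and $f(A)=0$, $g(X)$ divides $f(X)$ in $L[X]$. Hence every root of $g$ in $\overline{L}$ is a root of $f$. Since $f\in K[X]$, after dividing by its leading coefficient I may assume $f$ is monic, and then its roots are integral over $K$; so the roots $\alpha_1,\ldots,\alpha_d$ of $g$ in $\overline{L}$ are integral over $K$ as well. Being the minimal polynomial, $g$ is monic, so its coefficients are, up to sign, the elementary symmetric polynomials in the $\alpha_i$. These are polynomial expressions in elements integral over $K$, hence themselves integral over $K$. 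The coefficients of $g$ also lie in $L$ by hypothesis, and since $K$ is integrally closed in $L$, they must lie in $K$. Thus $g(X)\in K[X]$.

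For $h$, I would use the standard linear algebra fact that over any field the characteristic polynomial and the minimal polynomial of a matrix share the same set of roots (in an algebraic closure), only the multiplicities differ. Concretely, by Cayley--Hamilton $g\mid h$ in $L[X]$, and conversely every irreducible factor of $h$ over $L$ divides $g$, so the roots of $h$ in $\overline{L}$ are exactly the roots of $g$. By the first part, these are integral over $K$. The characteristic polynomial $h$ is monic, so again its coefficients are (up to sign) elementary symmetric polynomials in its roots and therefore integral over $K$. These coefficients lie in $R\subseteq L$, and the integral closedness of $K$ in $L$ forces them into $K$. Hence $h(X)\in K[X]$ as well.

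The only genuinely delicate point is the passage from integrality of the roots to integrality of the coefficients, which is immediate once one notes that both $g$ and $h$ are monic; everything else is a routine application of the minimal polynomial / characteristic polynomial formalism, so I do not anticipate a serious obstacle. The hypothesis $A\in\GL_n(R)$ (as opposed to merely $\M_n(R)$) plays no role in the argument beyond ensuring that $h$ and $g$ are defined in the usual way, and can be recorded as such.
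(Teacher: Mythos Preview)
Your proposal is correct and follows essentially the same line as the paper's proof: both argue that $g\mid f$ in $L[X]$, deduce that the roots of $g$ (and hence of $h$, which has the same root set as $g$) are integral over $K$, and then use the integral closedness of $K$ in $L$ to conclude. Your version is in fact slightly more careful, in that you explicitly normalize $f$ to be monic before invoking integrality of its roots, and you spell out the elementary symmetric function step; your closing remark that $A\in\GL_n(R)$ is not actually used is also accurate.
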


\begin{proof}
$g(X)$ divides $f(X)$ in $L[X]$. Let $L'$ be a splitting field of
$f$ over $L$. Since $f(X)\in K[X]$, the roots of $f$ in $L'$ (and in
particular those of $g$) are integral over $K$, whence the
coefficients of $g$ are too. Moreover, $h(X)$ has the same roots as $g(X)$,
so the coefficients of $h$ are integral over $K$ as well.
But $K$ is integrally closed in $L$, so $g(X), h(X)\in K[X]$.
\end{proof}\

\begin{prop} \label{coprime}
Suppose $A\in\GL_n(K[t])$ satisfies $f(A)=0$ for some $f\in K[X]$, $f\neq0$.
Furthermore, write $f=f_1\cdots f_r$, with $f_1,\ldots\!,f_r\in K[X]$ mutually coprime. 
Then $K[t]^n=\Ker(f_1(A))\oplus\cdots\oplus \Ker(f_n(A))$,
and $A$ is conjugate over $K[t]$ to a block diagonal matrix, with blocks
$A_1,\ldots\!,A_r$ satisfying $f_i(A_i)=0$ for all $i$.

Moreover, if $f$ is the minimal (resp.\!\! characteristic) polynomial of $A$,
and each $f_i$ is monic, then $f_i$ is the minimal (resp.\! characteristic) polynomial of $A_i$ for all $i$.
\end{prop}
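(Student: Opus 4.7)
The plan is to adapt the classical primary decomposition theorem to matrices over the PID $K[t]$. Since $f_1, \ldots, f_r$ are pairwise coprime in the PID $K[X]$, an iterated B\'ezout argument yields polynomials $u_1, \ldots, u_r \in K[X]$ with $\sum_{i=1}^r u_i v_i = 1$, where $v_i := \prod_{j \neq i} f_j$. Substituting $A$ gives $e_i := u_i(A) v_i(A) \in \M_n(K[t])$, and I would first verify that these form a complete set of orthogonal idempotents: $\sum_i e_i = I_n$ is immediate; $e_i e_j = 0$ for $i \neq j$ follows because $v_i v_j$ is divisible by $f$ in $K[X]$ and $f(A) = 0$; idempotency $e_i^2 = e_i$ then drops out of completeness plus orthogonality.

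Next, set $V_i := e_i(K[t]^n)$. By completeness and orthogonality, $K[t]^n = V_1 \oplus \cdots \oplus V_r$ as $K[t]$-modules. I would identify $V_i = \Ker(f_i(A))$: one inclusion uses $f_i(A) \cdot e_i = u_i(A) \cdot f(A) = 0$, and the other uses that $f_i(A) v = 0$ implies $e_j v = 0$ for every $j \neq i$ (because $v_j$ contains $f_i$ as a factor), so $v = \sum_j e_j v = e_i v \in V_i$. Since $K[t]$ is a PID, each $V_i$ is a free submodule of $K[t]^n$; concatenating bases of the $V_i$ gives a basis of $K[t]^n$, and the associated change-of-basis matrix in $\GL_n(K[t])$ conjugates $A$ into block diagonal form. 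The blocks $A_i$ are well-defined because $A$ commutes with every $e_i$ (they all lie in the commutative subalgebra $K[t][A]$), so each $V_i$ is $A$-stable; and $f_i(A_i) = 0$ follows from $V_i \subseteq \Ker(f_i(A))$.

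For the ``moreover'' clause, let $\mu_i$ and $\chi_i$ denote the minimal and characteristic polynomials of $A_i$. In the minimal polynomial case, $f_i(A_i) = 0$ forces $\mu_i \mid f_i$, so the $\mu_i$ are pairwise coprime; then the minimal polynomial of the block diagonal matrix is $\lcm(\mu_1, \ldots, \mu_r) = \prod_i \mu_i$, which must equal the minimal polynomial $f = \prod_i f_i$ of $A$, and by monicity $\mu_i = f_i$. The characteristic polynomial case is the main obstacle I anticipate, since here no analogous $\lcm$ identity is available and I must instead argue via irreducible factors: Cayley--Hamilton over $K[t]$ gives $\chi_i(A_i) = 0$, hence $\mu_i \mid \chi_i$ and, together with $\mu_i \mid f_i$, every irreducible factor of $\chi_i$ in $K[X]$ is an irreducible factor of $f_i$; pairwise coprimeness of the $f_i$ then confines each irreducible factor of $f = \prod_i f_i$ to exactly one $\chi_i$, and matching multiplicities in the identity $\prod_i \chi_i = \prod_i f_i$ (which holds because characteristic polynomials of block diagonal matrices multiply) yields $\chi_i = f_i$.
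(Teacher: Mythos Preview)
Your decomposition is essentially the paper's: both obtain $\sum g_i\hat f_i=1$ by B\'ezout (you package the resulting operators as orthogonal idempotents $e_i=u_i(A)v_i(A)$, the paper just writes $v=\sum g_i(A)\hat f_i(A)v$ directly) and both invoke the PID structure of $K[t]$ to get that each $V_i$ is free, hence a basis change in $\GL_n(K[t])$ exists.

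For the ``moreover'' clause there are small differences worth noting. Your minimal-polynomial argument via $\lcm(\mu_1,\ldots,\mu_r)=\prod\mu_i=f$ is equivalent to the paper's, which instead shows that if $h(A_j)=0$ then $f_1\cdots f_{j-1}\,h\,f_{j+1}\cdots f_r$ kills the block-diagonal form and hence is divisible by $f$, giving $f_j\mid h$. For the characteristic-polynomial case your argument is actually a bit more general than the paper's, which tacitly treats each $f_i$ as a power of a single irreducible (true in every application made of the proposition, but not in the stated hypothesis). One point you compress and should make explicit: to speak of irreducible factors of $\chi_i$ in $K[X]$ you need $\chi_i\in K[X]$ in the first place, which the paper secures via Lemma~\ref{K-relation} (applied with $f_i(A_i)=0$, $f_i\in K[X]$); and the step ``$\mu_i\mid\chi_i$, $\mu_i\mid f_i$, hence every irreducible factor of $\chi_i$ divides $f_i$'' relies on the standard fact that $\mu_i$ and $\chi_i$ have the same irreducible factors---valid over the field $K(t)$, and transportable to $K[X]$ because irreducibles of $K[X]$ stay irreducible in $K(t)[X]$.
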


\begin{proof}
Consider the ideals $\a_i:=(f_i)\subseteq K[X]$. Then $\a_i+\a_j=(1)$ for all $i\neq j$.
Note that the ideals $\hat{\a_i}:=\a_1\cdots\a_{i-1}\a_{i+1}\cdots\a_r$ satisfy
$$
(1) = \prod_{i<j}(\a_i+\a_j) \subseteq \hat{\a_1}+\cdots+\hat{\a_r}
$$
whence $\hat{\a_1}+\cdots+\hat{\a_r}=(1)$. The above inclusion can be
justified as follows: any term $\a_{k_1}\cdots\a_{k_m}$ in the product on the left
(with $m:=\genfrac{}{}{}{1}{1}{2}r(r-1)$) originates from choices
between the two terms in all factors $\a_i+\a_j$.
Any term $\a_{k_1}\cdots\a_{k_m}$ must contain at least $r-1$
of the $\a_i$. Namely, given any $\a_i$ and $\a_j$ with $i\neq j$, the
factor $\a_i+\a_j$ appears in the product, so at least one of the two
must appear in the mentioned term. Therefore, $\a_{k_1}\cdots\a_{k_m}\subseteq\hat{\a_i}$
for some $i$.

So let $g_1,\ldots\!,g_r\in K[X]$ such that $g_1\hat{f_1}+\cdots+g_r\hat{f_r}=1$,
where for $i=1,\ldots\!,r$, $\hat{f_i}:=f_1\cdots f_{i-1}f_{i+1}\cdots f_r$.
We now claim that $K[t]^n=V_1\oplus\cdots\oplus V_r$, where $V_i:=\Ker(f_i(A))$ for all $i$.
First, note that the $V_i$ are $A$-invariant $K[t]$-submodules, and that they are all
free modules, being submodules of a finite free module over a principal ideal domain.
Second, for any $v\in K[t]^n$ we have
$$
v\,=\,Iv\,=\,g_1(A)\hat{f_1}(A)v+\cdots+g_r(A)\hat{f_r}(A)v\ \in\ V_1+\cdots+V_r
$$
since $f_i(A)\hat{f_i}(A)v=f(A)v=0$ for all $i$. Finally,
to justify the direct sum notation, suppose $v_1+\cdots+v_r=0$ for
certain $v_1\in V_1,\ldots\!,v_r\in V_r$. Then each $v_i$ satisfies
$$
v_i=(g_1(A)\hat{f_1}(A)+\cdots+g_r(A)\hat{f_r}(A))v_i=g_i(A)\hat{f_i}(A)v_i=g_i(A)\hat{f_i}(A)(v_1+\cdots+v_r)=0
$$
Now, for all $i\in\{1,\ldots\!,n\}$, let $m_i$ be the rank of $V_i$ as
a free $K[t]$-module, and $A_i\in\GL_{m_i}(K[t])$ the matrix representation
of the restriction of $A$ to $V_i$, with respect to some basis of $V_i$.
Taking these $r$ bases together to form a new basis of $K[t]^n$, we see that
$A$ is conjugate over $K[t]$ to the block diagonal matrix $A_0$ with
$A_1,\ldots\!,A_r$ on the diagonal. Also, $f_i(A_i)=0$ since $f_i(A)=0$ on $V_i$.

Now assume that each $f_i$ is monic. It is obvious from the shape of $A_0$ 
that the characteristic polynomial of $A_0$ (which is also the characteristic 
polynomial of $A$) is equal to the product of the characteristic polynomials of the $A_i$.
Also, the characteristic polynomial of $A_i$ (an element of $K[X]$ by Lemma~\ref{K-relation})
must be a power of the same monic irreducible polynomial that $f_i$ is also a power of.
Hence, if $f$ is the characteristic polynomial of $A$, then $f_i$ is
the characteristic polynomial of $A_i$.

Finally, assume that $f$ is the minimal polynomial of $A$ (which is also the
minimal polynomial of $A_0$). Choose $j\in\{1,\ldots\!,r\}$.
Suppose $h(A_j)=0$ for some $h(X)\in K[X]$, and define
$\hat{f}:=f_1\cdots f_{j-1}hf_{j+1}\cdots f_r$.
Then $\hat{f}(A_0)=0$, since it is the block diagonal matrix consisting of
the blocks $\hat{f}(A_i)$. (And $f_i(A_i)=0$ if $i\neq j$, and $h(A_i)=0$ if $i=j$.)
Whence, $f(X)\mid \hat{f}(X)$, {\it i.e.} $f_i(X)\mid h(X)$.
So $f_i$ must be the minimal polynomial of $A_i$.
\end{proof}\

\begin{thm} \label{irreduc}
Let $A \in \GL_n(K[t])$ such that its minimal polynomial $g(X)$ over $K(t)$
is an irreducible polynomial in $K[X]$ of degree $d\geq1$.
\begin{enumerate}
\item If $g$ is separable over $K$, then $A$ is conjugate
(over $K[t]$) to the $n\times n$ block diagonal matrix
where each block is the companion matrix of $g$.
\item If $d=n$ then $A$ is conjugate (over $K[t]$) to the companion matrix of $g$.
\end{enumerate}
\end{thm}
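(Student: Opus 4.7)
The strategy is to handle both parts uniformly by enlarging the scalar ring from $K[t]$ to $L \otimes_K K[t]$, where $L := K[X]/(g)$. Since $g$ is irreducible in $K[X]$, the ring $L$ is a field extension of $K$ of degree $d$, and $L \otimes_K K[t] \cong L[t]$ is a polynomial ring in $t$ over a field, hence a PID. The relation $g(A) = 0$ provides a $K$-algebra homomorphism $L \to \End_{K[t]}(K[t]^n)$ sending $X \mapsto A$, and this action commutes with $K[t]$-scalar multiplication, so $V := K[t]^n$ acquires an $L[t]$-module structure in which $X$ acts as $A$.

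The first step is to verify that $V$ is a finitely generated, torsion-free $L[t]$-module. Finite generation is immediate since $V$ is already finitely generated over $K[t] \subseteq L[t]$. For torsion-freeness, embed $V$ into $V \otimes_{K[t]} K(t) = K(t)^n$: because $g$ is the minimal polynomial of $A$ over $K(t)$ it is irreducible in $K(t)[X]$, so $L(t) := K(t)[X]/(g) \cong L \otimes_K K(t)$ is a field, making $K(t)^n$ an $L(t)$-vector space and thus $L[t]$-torsion-free; so $V$ is $L[t]$-torsion-free. By the structure theorem over the PID $L[t]$, $V$ is a free $L[t]$-module, and comparing $L(t)$-dimensions of $V \otimes_{L[t]} L(t) = K(t)^n$ shows its rank equals $n/d$ (which in particular forces $d \mid n$).

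In the last step, choose an $L[t]$-basis $v_1, \ldots, v_{n/d}$ of $V$. Since $L[t]$ is a free $K[t]$-module with basis $\{1, X, X^2, \ldots, X^{d-1}\}$ and $X$ acts on $V$ as $A$, the family $\{A^j v_i : 1 \leq i \leq n/d,\ 0 \leq j \leq d-1\}$ is a $K[t]$-basis of $V$. Within each block $\{v_i, Av_i, \ldots, A^{d-1} v_i\}$ the relation $g(A) v_i = 0$ re-expresses $A \cdot A^{d-1} v_i$ as a combination of $v_i, Av_i, \ldots, A^{d-1} v_i$ whose coefficients are the negated lower coefficients of $g$; hence the matrix of $A$ restricted to this block is exactly the companion matrix of $g$. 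Consequently $A$ is conjugate over $K[t]$ to the block diagonal matrix with $n/d$ copies of the companion matrix of $g$. This yields part (1), and part (2) is the special case $n/d = 1$.

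The main obstacle is the $L[t]$-torsion-freeness verification, which rests essentially on $g$ being irreducible over $K(t)$ (so $L \otimes_K K(t)$ really is a field, rather than an Artinian ring with zero divisors), a property that is part of the hypothesis since $g$ is assumed to be the minimal polynomial over $K(t)$. I observe in passing that the argument makes no use of separability, so the same proof would establish part (1) without the separability assumption.
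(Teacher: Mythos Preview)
Your proof is correct and takes a genuinely different route from the paper's. The paper passes to the splitting field of $g$ and uses Galois theory: in part~(1) it decomposes $L[t]^n$ into eigenspaces $\Ker_L(A-\sigma_i(\alpha)I)$, invokes transitivity of the Galois action (which needs separability) to see that all these eigenspaces have equal rank $m=n/d$, and then carries out an explicit matrix construction to descend a basis of $\Ker_{K(\alpha)}(A-\alpha I)$ to a $K[t]$-basis in which $A$ is block-companion. Part~(2) is handled separately by showing directly that $\Ker_{K(\alpha)}(A-\alpha I)\neq\{0\}$, which does not require separability.

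Your argument replaces all of this with a single module-theoretic step: view $V=K[t]^n$ as an $L[t]$-module via $X\mapsto A$, observe it is finitely generated and torsion-free over the PID $L[t]$ (the torsion-freeness coming from irreducibility of $g$ over $K(t)$, so that $L\otimes_K K(t)$ is a field), hence free of rank $n/d$, and read off the block-companion form from any $L[t]$-basis. This is shorter, treats both parts uniformly, and, as you note, never uses separability, so it actually strengthens part~(1) by removing that hypothesis. The paper's approach is more hands-on with explicit conjugating matrices, but your structural argument buys both economy and generality.
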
\

\begin{proof}
The characteristic polynomial of $A$ (an element of $K[X]$ by Lemma~\ref{K-relation})
must be a power of $g$, say $g^m$ with $m\in\N^*$ such that $n=dm$.
Write $g(X)=X^d+c_{d-1}X^{d-1}+\cdots+c_1X+c_0$, where $c_i\in K$ for all $i$.
Moreover, let $L$ denote the splitting field of $g$ over $K$.
Also, we use the following notation: if $K_1\subseteq K_2$ are fields and $M\in\M_n(K_1[t])$,
then $\Ker_{K_2}(M)$ denotes the kernel of the endomorphism of $K_2[t]^n$
induced by $M$. This kernel is then viewed as a $K_2[t]$-module.
Furthermore, $M^\top$ denotes the transpose of any matrix $M$.

First, assume that $g$ is separable over $K$. Then $g$ has $d$ distinct roots in $L$.
Furthermore, $L/K$ is a Galois extension, say with Galois group $G$.
Since $L$ is the splitting field of an irreducible polynomial over $K$, $G$ acts transitively
on the roots of $g$. Therefore, we can find $\sigma_1,\sigma_2,\ldots\!,\sigma_d\in G$
(with $\sigma_1$ the identity map) and $\alpha\in L$ such that
$\sigma_1(\alpha),\ldots\!,\sigma_d(\alpha)$ are the roots of $g$ in $L$.
Then $\Ker_L(A-\sigma_i(\alpha)I)=\tilde{\sigma_i}(\Ker_L(A-\alpha I))$, where the automorphism
$\tilde{\sigma_i}$ is the natural extension of $\sigma_i$ to $L[t]^n$
(preserving $t$). As a result, $\Ker_L(A-\sigma_1(\alpha)I),\ldots\!,\Ker_L(A-\sigma_d(\alpha)I)$
all have the same rank as free $L[t]$-modules. (Note that indeed they are all
free modules, being submodules of a finite free module over a principal ideal domain.)
Moreover, from Proposition~\ref{coprime} (over $L[t]$ instead of $K[t]$)
we learn that $L[t]^n=\Ker_L(A-\sigma_1(\alpha)I)\oplus\cdots\oplus\Ker_L(A-\sigma_d(\alpha)I)$.
Consequently, the rank of $\Ker_L(A-\sigma_i(\alpha)I)$ equals $m$ for all $i$.

Again by Proposition~\ref{coprime} (and using the fact that $g$ is separable over $K$),
we know that $\Ker_{K(\alpha)}(A-\alpha I)$ is a direct summand of $K(\alpha)[t]^n$.
Also, tensoring with a free (and thus flat) module preserves kernels, so we have
$L\otimes_{K(\alpha)}\Ker_{K(\alpha)}(A-\alpha I)=\Ker_L(A-\alpha I)$. Hence,
since $\Ker_{K(\alpha)}(A-\alpha I)$ is a free $K(\alpha)[t]$-module,
its rank over $K(\alpha)[t]$ is equal to the rank of
$\Ker_L(A-\alpha I)$ over $L[t]$, which is $m$.

Let $\{v_1,\ldots\!,v_m\}$ be a basis of $\Ker_{K(\alpha)}(A-\alpha I)$.
Let $B\in\M_{n\times m}(K(\alpha)[t])$ be the matrix with $v_1,\ldots\!,v_m$ as its columns,
which satisfies $AB=\alpha B$. Note that then
$$
\M_n(K(\alpha)[t])B = \sum_{i=0}^{d-1} \M_n(K[t])\alpha^iB = \sum_{i=0}^{d-1} \M_n(K[t])A^iB \subseteq \M_n(K[t])B
$$
whence $\M_n(K(\alpha)[t])B = \M_n(K[t])B$.
Since $v_1,\ldots\!,v_m$ are the first $m$ elements of a basis of $K(\alpha)[t]^n$,
$B$ can be completed to an invertible $n\times n$ matrix over $K(\alpha)[t]$.
Taking together the first $m$ rows of its inverse, we obtain a
$B'\in\M_{m\times n}(K(\alpha)[t])$ such that $B'B=I_m$. Now define
$$
E_\alpha :=
\left(\begin{array}{cccc}
e_\alpha & 0 & \cdots & 0\\
0 & \ddots & \ddots & \vdots\\
\vdots & \ddots & \ddots & 0\\
0 & \cdots & 0 & e_\alpha
\end{array}\right)\ \in\ \M_{n\times m}(K(\alpha))
$$
where $e_\alpha:=(1\ \alpha\,\cdots\,\alpha^{d-1})^\top$ and each ``$0$'' is a column
consisting of $d$ zeroes. For every $n'\geq1$ this gives an isomorphism of $K[t]$-modules
\begin{eqnarray*}
\M_{n'\times n}(K[t]) & \longrightarrow & \M_{n'\times m}(K(\alpha)[t])\\
N & \mapsto & NE_\alpha
\end{eqnarray*}
using the fact that $\{1,\alpha,\ldots\!,\alpha^{d-1}\}$ is a $K[t]$-basis of $K(\alpha)[t]$.
In particular, there exists a $D \in \M_n(K[t])$ such that $DE_\alpha=B$. But we claim that
even $D \in \GL_n(K[t])$. Namely, $E_\alpha=(E_\alpha B')B\in\M_n(K(\alpha)[t])B = \M_n(K[t])B$,
say $E_\alpha=D'B$ with $D'\in\M_n(K[t])$. Then $D'DE_\alpha=D'B=E_\alpha$, so $D'D=I_n$.
As a result, $D'A(D')^{-1}E_\alpha=D'AB=D'\alpha B=\alpha D'B=\alpha E_\alpha$.
It is also readily verified that $C^\top e_\alpha=\alpha e_\alpha$, where
$$
C := \left(\begin{array}{ccccc} 0 & \cdots & \cdots & 0 & -c_0\\
1 & \ddots & & \vdots & \vdots\\
0 & \ddots & \ddots & \vdots & \vdots\\
\vdots & \ddots & \ddots & 0 & \vdots\\
0 & \hdots & 0 & 1 & \!\!\!-c_{\scriptscriptstyle d-1}\end{array}\right)
$$
is the companion matrix of $g$. Hence,
$$
D'A(D')^{-1} =
\left(\begin{array}{cccc}
C^\top & 0 & \cdots & 0\\
0 & \ddots & \ddots & \vdots\\
\vdots & \ddots & \ddots & 0\\
0 & \cdots & 0 & C^\top
\end{array}\right)
$$
Note that if in all of the above we replace $A$ by $C$ (so then $m=1$),
we obtain a proof of the fact that $C$ is conjugate to $C^\top$.
Combined with the above, this establishes the first statement of this theorem.

Now we turn to the second statement. To explain why we don't need separability
in this case, note that in the proof of the first statement we only used the fact
that the rank of $\Ker_{K(\alpha)}(A-\alpha I)$ is {\it at least} $m$.
So if in the second case we can show directly that the rank is at least 1,
we are done by copying the remainder of the proof of the first statement (with $m=1$).

We will now show that $\Ker_{K(\alpha)}(A-\alpha I)\neq\{0\}$ (which proves that
the rank is at least 1). Since $g(A)=(A-\alpha I)h(A)$ for some $h(X)\in K(\alpha)[X]$, 
$\Ker_{K(\alpha)}(A-\alpha I)$ contains the image of $h(A)$. So it suffices to show that $h(A)\neq0$.
To see this, note that $h(X)=\sum_{i=0}^{d-1}h_i(X)\alpha^i$, where $h_0,\ldots\!,h_{d-1}\in K[X]$ 
all have degree strictly less than $d$. So $h_i(A)\neq0$ for all $i$, whence $h(A)\neq0$. 
\end{proof}\

\begin{rem}
In Theorem~\ref{irreduc} the assumption that the minimal polynomial is
irreducible (instead of the more general case of being a power of an irreducible polynomial),
is really necessary. Namely, suppose $A=I_n+tN$, where $N$
is any nonzero nilpotent matrix in $\M_n(K)$. Then $(A-I_n)^n=0$, so
the minimal polynomial of $A$ over $K(t)$ is a nontrivial power of $X-1$ (and thus separable).
However, $A$ is not conjugate to an element of $\GL_n(K)$:
for any $B\in\GL_n(K[t])$ we have $B^{-1}AB=I+tB^{-1}NB$, and $tB^{-1}NB\notin\M_n(K)$.
\end{rem}\

\begin{cor}\label{finite order}
Let $K$ be a field and $A \in \GL_n(K[t])$ satisfying $A^d=I_n$,
where $\Char(K)\nmid d$.
Then there exists a $B\in\GL_n(K[t])$ such that $B^{-1}AB\in\GL_n(K)$.
\end{cor}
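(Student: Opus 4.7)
The plan is to combine Proposition~\ref{coprime} and Theorem~\ref{irreduc}(1). The hypothesis $A^d=I_n$ means that $f(X):=X^d-1\in K[X]$ annihilates $A$. Because $\Char(K)\nmid d$, the image of $d$ in $K$ is nonzero, so $f'(X)=dX^{d-1}$ has only $0$ as a root while $f(0)=-1$; thus $\gcd(f,f')=1$ and $f$ is separable over $K$. Consequently $f$ factors in $K[X]$ as a product $f_1\cdots f_r$ of pairwise coprime monic irreducible polynomials, each of which is separable (its roots being among the distinct roots of $f$).

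Applying Proposition~\ref{coprime} to this factorization supplies a $B_0\in\GL_n(K[t])$ with $B_0^{-1}AB_0=\mathrm{diag}(A_1,\ldots,A_r)$, where each $A_i\in\GL_{m_i}(K[t])$ satisfies $f_i(A_i)=0$ (discarding empty blocks, we may assume $m_i\geq 1$). By Lemma~\ref{K-relation} the minimal polynomial of $A_i$ over $K(t)$ lies in $K[X]$; it is a monic divisor of the irreducible polynomial $f_i$ of degree at least one, hence equals $f_i$ itself. Thus the hypothesis of Theorem~\ref{irreduc}(1) is met for every $A_i$, producing a $B_i\in\GL_{m_i}(K[t])$ such that $B_i^{-1}A_iB_i$ is block diagonal with each diagonal block equal to the companion matrix $C_{f_i}\in\M_{\deg f_i}(K)$ of $f_i$.

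Setting $B:=B_0\cdot\mathrm{diag}(B_1,\ldots,B_r)\in\GL_n(K[t])$ yields $B^{-1}AB$ block diagonal with all entries in $K$, so $B^{-1}AB\in\GL_n(K)$ as required. I do not foresee any real obstacle here: the argument is essentially a direct synthesis of the preceding structural results. The only mild subtlety is verifying that separability of $f$ descends to its irreducible factors $f_i$ (immediate, as noted above), since this is precisely what allows the invocation of part~(1) of Theorem~\ref{irreduc} — the part which places the conjugating matrix over $K[t]$ rather than requiring a larger ring.
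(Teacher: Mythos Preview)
Your proof is correct and follows essentially the same route as the paper: factor a separable annihilating polynomial into coprime irreducibles, apply Proposition~\ref{coprime} to block-diagonalize, and then invoke Theorem~\ref{irreduc}(1) on each block. The only cosmetic difference is that the paper first passes to the minimal polynomial of $A$ (a divisor of $X^d-1$) before factoring, whereas you factor $X^d-1$ itself and then discard empty blocks; both work equally well.
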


\begin{proof}
Note that the minimal polynomial of $A$ over $K(t)$, say $g(X)$, is an element of
$K[X]$ by Lemma~\ref{K-relation}, and of course a factor of $X^d-1$.
Since $\Char(K)\nmid d$, $X^d-1$ and its derivative have no common
zero in an algebraic closure of $K$, so neither do $g$ and $g'$.
Hence, $g$ is a product of mutually coprime monic irreducible polynomials,
which are also separable. Using Proposition~\ref{coprime}, we may
reduce to the case that $g$ is irreducible and separable.
But this case is settled by Theorem~\ref{irreduc}.
\end{proof}\

\noindent Theorem~\ref{corresp2} now gives

\begin{cor}\label{linearizable}
If $d$ and $q$ are relatively prime and $f \in \GA_n(\F_q)^{(q)}$
has finite order $d$, then $f$ is linearizable.
\end{cor}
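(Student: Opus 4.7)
The plan is to translate the entire problem into a statement about matrices in $\GL_n(\F_q[t])$ using the isomorphism constructed in Theorem~\ref{corresp2} (which, by Corollary~\ref{JCPPP}, restricts to an isomorphism of groups $\GA_n(\F_q)^{(q)} \overset\sim\longrightarrow \GL_n(\F_q[t])$), and then invoke Corollary~\ref{finite order} directly.

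More precisely, set $A := \J_q(f) \in \GL_n(\F_q[t])$. Since $\J_q$ sends composition to matrix multiplication and sends the identity map to $I_n$, the hypothesis $f^d = \id$ translates to $A^d = I_n$. Now write $q = p^r$ where $p = \Char(\F_q)$. The assumption $\gcd(d,q)=1$ forces $p \nmid d$, so the hypotheses of Corollary~\ref{finite order} are met with $K = \F_q$. Applying that corollary, we obtain $B \in \GL_n(\F_q[t])$ such that $B^{-1}AB \in \GL_n(\F_q)$.

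To pull this back to the polynomial side, use Remark~\ref{1to1} to produce the unique $g \in (\F_q[X]^{(q)})^n$ with $\J_q(g) = B$; since $B$ is invertible, Corollary~\ref{JCPPP} gives $g \in \GA_n(\F_q)^{(q)}$. Because $\J_q$ is a group isomorphism on the invertible objects,
\begin{equation*}
\J_q(g^{-1} \circ f \circ g) \;=\; \J_q(g)^{-1} \J_q(f) \J_q(g) \;=\; B^{-1}AB \;\in\; \GL_n(\F_q).
\end{equation*}
The text immediately before Theorem~\ref{diagonalize} records that the isomorphism of Theorem~\ref{corresp2} is the identity on $\GL_n(\F_q)$, so $g^{-1} \circ f \circ g$ is a linear automorphism. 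Hence $f$ is linearizable.

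There is no real obstacle: all the work has been done in Corollary~\ref{finite order} and in the dictionary between $(\F_q[X]^{(q)})^n$ and $\M_n(\F_q[t])$. The only thing to be careful about is that the conjugating matrix $B$ is not a priori in $\GL_n(\F_q)$ but merely in $\GL_n(\F_q[t])$; this is exactly what allows the conjugation to take place within $\GA_n(\F_q)^{(q)}$ (rather than merely within $\GA_n(\F_q)$), but for the statement of linearizability any $g \in \GA_n(\F_q)$ would do.
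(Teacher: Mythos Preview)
Your proof is correct and is precisely the argument the paper intends: the paper's entire proof is the single phrase ``Theorem~\ref{corresp2} now gives'', and you have simply unpacked that phrase by transporting the problem through $\J_q$ to $\GL_n(\F_q[t])$, invoking Corollary~\ref{finite order}, and transporting back. There is no substantive difference in approach.
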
\

\section{Polynomial maps without mixed terms}\label{nomixedterms}\

\noindent In this final section we study all problems and conjectures mentioned 
in the Introduction for the case of a polynomial (map) {\it without mixed terms}. 
$K$ will be a field, mostly of characteristic zero.\\

\begin{defn}
A polynomial $f_1\in K[X]$ is said to be {\it without mixed terms} if we have 
$f_1\in K[X_1]+\cdots+K[X_n]$. 
A polynomial map $(f_1,\ldots\!,f_n)\in K[X]^n$ ($K$ a field) is
{\it without mixed terms} if each of the $f_i$ is.
\end{defn}\

\noindent Linearized polynomial maps are examples of polynomial maps 
without mixed terms. But the properties of linearized polynomial maps 
are very different from those of polynomial maps without mixed terms 
over a zero characteristic field. Namely, we have the following
theorem. First, $\BA_n(K)$ is the subgroup of {\it triangular}
automorphisms, {\it i.e.} all automorphisms $f=(f_1,\ldots\!,f_n)$
with $f_i-a_iX_i\in K[X_{i+1},\ldots\!,X_n]$ and $a_i\in K^*$ for
all $i$. (The notation comes from the fact that
$\BA_n(K)\cap\GL_n(K)$ equals the Borel subgroup of $\GL_n(K)$.)
Furthermore, such an $f$ is called {\it unitriangular} if
$a_1=\cdots=a_n=1$. $\BA_n^{\scriptscriptstyle(1)}(K)$ will be the
subgroup of unitriangular automorphisms. \\

\begin{thm} \label{unitrMap}
Let $f\in\GA_n(K)$ without mixed terms, and assume further that 
its linear part equals the identity. If $K$ has characteristic zero, 
then there exists a permutation $\pi$ of the $X_i$ such that $\pi^{-1}f\pi$ is unitriangular.

Furthermore, if $K$ has characteristic $p>0$, then there exists a
permutation $\pi$ of the $X_i$ such that
$\pi^{-1}f\pi\in\BA_n^{\scriptscriptstyle(1)}(K)+(K\big[X_1^{{}^p}\big]+\cdots+K\big[X_n^{{}^p}\big])^{{}^n}$.
\end{thm}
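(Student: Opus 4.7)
The approach is to reduce everything to a combinatorial statement about a dependency graph $G$ of $f$ and then force $G$ to be acyclic via a Jacobian determinant computation. First, using that $f$ has no mixed terms and linear part equal to the identity, write each component uniquely in the form
\[
f_i \,=\, c_i + X_i + \sum_{j=1}^{n} g_{ij}(X_j), \qquad c_i \in K,\ g_{ij}(X_j) \in X_j^2\,K[X_j],
\]
and put a directed edge $i \to j$ in $G$ precisely when $g_{ij}'(X_j) \neq 0$. In characteristic zero this is the same as $g_{ij} \neq 0$, while in characteristic $p$ it says $g_{ij} \notin K[X_j^p]$. Once $G$ is acyclic, a topological sort of $G$ supplies the desired permutation $\pi$: after relabeling, $g_{ij}' = 0$ whenever $j \leq i$. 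In characteristic zero this forces $g_{ij} = 0$ for $j \leq i$ (since $g_{ij}(0)=0$), so $\pi^{-1} f \pi \in \BA_n^{\scriptscriptstyle(1)}(K)$; in characteristic $p$ it gives $g_{ij} \in K[X_j^p]$ for $j \leq i$, and the decomposition
\[
(\pi^{-1} f \pi)_i \,=\, \Bigl( c_i + X_i + \sum_{j > i} g_{ij}(X_j) \Bigr) + \sum_{j \leq i} g_{ij}(X_j)
\]
exhibits $\pi^{-1} f \pi$ as an element of $\BA_n^{\scriptscriptstyle(1)}(K) + (K[X_1^p] + \cdots + K[X_n^p])^n$, as required.

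To control $G$, I would use the Jacobian $\J f = I_n + M$ with $M_{ij} = g_{ij}'(X_j)$. Since $f \in \GA_n(K)$, $\det(I_n + M) \in K^*$, and evaluating at $X = 0$ (using $g_{ij}'(0) = 0$) forces $\det(I_n + M) = 1$. The crucial observation is that column $j$ of $M$ depends only on $X_j$, so substituting $X_j = 0$ for every $j$ outside a chosen $S \subseteq \{1, \ldots, n\}$ makes the matrix block lower triangular with diagonal blocks $I_S + M_S$ and $I_{S^c}$. Consequently, for every subset $S$,
\[
\det(I_S + M_S) \,=\, 1 \quad \text{in } K[X_j : j \in S], \text{ where } M_S = (g_{ij}'(X_j))_{i,j \in S}.
\]

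The main obstacle is establishing that $G$ is acyclic, which I would argue by contradiction. Suppose $G$ contains a cycle and pick one $C : i_1 \to i_2 \to \cdots \to i_k \to i_1$ of minimum length $k \geq 1$. Minimality forces $G$ restricted to $V(C) = \{i_1, \ldots, i_k\}$ to consist of exactly the $k$ cycle edges: any extra edge (a chord $i_a \to i_b$ with $b \not\equiv a+1 \pmod k$, or a self-loop $a = b$ when $k \geq 2$) would combine with a portion of $C$ to produce a strictly shorter cycle. Applying the identity above with $S = V(C)$ and expanding $\det(I_S + M_S) = \sum_{T \subseteq S} \det M_T$ via principal minors, every nonempty proper $T \subsetneq V(C)$ contributes $\det M_T = 0$: indeed $G|_T$ is a disjoint union of proper sub-paths of $C$, so some vertex of $T$ has out-degree $0$ in $G|_T$ and no permutation of $T$ uses only its edges. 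Only $T = \emptyset$ and $T = V(C)$ survive, the latter via the unique cyclic permutation $\sigma : i_j \mapsto i_{j+1}$, and the identity collapses to
\[
1 + (-1)^{k-1} \prod_{j=1}^{k} g_{i_j, i_{j+1}}'(X_{i_{j+1}}) \,=\, 1.
\]
But the product is a nonzero element of the UFD $K[X]$, since each factor is a nonzero polynomial in a distinct variable; this is a contradiction. The case $k=1$ is included and delivers the no-self-loop statement $g_{ii}'=0$. Hence $G$ is acyclic, and the bookkeeping of the first paragraph finishes the proof.
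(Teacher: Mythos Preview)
Your proof is correct. Both your argument and the paper's pass through the Jacobian $\J f$, which is a matrix in separated variables with $\J f(0)=I_n$, and both use the same variable--substitution device (set $X_j=0$ for $j\notin S$ and exploit that column $j$ depends only on $X_j$) to show that every principal minor of $\J f$ equals $1$. The divergence is in how one then extracts a permutation. The paper isolates a standalone matrix lemma over a domain (Lemma~\ref{prinmin1}): any $A\in\GL_n(R)$ with all principal minors equal to $1$ is permutation--unitriangular. Its proof is an induction on $n$ that hunts for an elementary column in each principal submatrix $A_i$; if $A$ itself has none, this manufactures a permutation $\sigma$ of $\{1,\ldots,n\}$ which is then forced to be the full $n$-cycle, and the contradiction $0=|A|-1=\prod_i a_{i,\sigma(i)}$ drops out. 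Your route sidesteps the induction and the elementary-column bookkeeping by going straight to the dependency graph: a minimal cycle $C$, together with the expansion $\det(I_S+M_S)=\sum_{T\subseteq S}\det M_T$ on $S=V(C)$, collapses to the identical cycle-product contradiction in one stroke (and the case $k=1$ neatly absorbs the diagonal condition $g_{ii}'=0$). The punchline is the same in both arguments; your packaging is more graph-theoretic and arguably more direct, while the paper's packaging yields a reusable matrix-theoretic lemma (which, as the paper remarks, is equivalent to a known result on matrices with all principal minors equal to zero).
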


\begin{proof}
The first statement is a direct consequence (using Jacobians) of
Theorem~\ref{unitrMat}, which considers certain matrices with
entries in $K[X]$. In characteristic $p>0$ we can use the same
theorem, but we need to take into account that the $i$th partial
derivative of a power $X_i^{{}^m}$ vanishes if and only if $p\mid m$.
\end{proof}\

\noindent Note that, given any automorphism without mixed terms, we can compose it
on the left with the inverse of its linear part, to obtain an automorphism
satisfying all hypotheses of Theorem~\ref{unitrMap}.\\

\begin{defn}
$A=(a_{ij})\in\M_n(K[X])$ is a {\it matrix in separated variables}
if $a_{ij}\in K[X_j]$ for all $i$ and $j$. These matrices form a
left $\M_n(K)$-submodule of $\M_n(K[X])$. 
\end{defn}\

\noindent In the following, we use some well-known terminology 
from matrix theory: A {\it principal submatrix (of order $k$)} 
of a square matrix is a submatrix formed by a subset of ($k$)
rows and the corresponding subset of columns. And a {\it principal
($k$-)minor} of a square matrix is the determinant of a principal
submatrix (of order $k$). \\

\begin{thm}\label{unitrMat}
Every matrix $A\in\GL_n(K[X])$ in separated variables with $A(0)=I_n$ is 
(after conjugation by a permutation matrix) unitriangular (upper triangular with only 1's on the diagonal).
\end{thm}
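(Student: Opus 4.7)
My plan is to combine two normalizations with a graph-theoretic induction on $n$. First, since $A \in \GL_n(K[X])$, the determinant $|A|$ is a unit in $K[X]$, hence in $K^*$, and evaluating at the origin gives $|A| = |I_n| = 1$. To pin down the diagonal entries, I substitute $X_j = 0$ for all $j \neq i$: because $a_{kl}(0) = \delta_{kl}$, every column of the resulting matrix other than the $i$-th becomes a standard basis vector, so only the identity permutation contributes to the determinant and one obtains $a_{ii}(X_i)$. Comparison with $|A|=1$ forces $a_{ii}(X_i) = 1$ for every $i$. Writing $A = I_n + E$, this gives $E_{ii} = 0$ and $X_j \mid E_{ij}$ for $i \neq j$.

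Next I would reduce the theorem to a purely combinatorial claim. Let $G$ be the directed graph on vertex set $\{1, \ldots, n\}$ with an edge $i \to j$ whenever $i \neq j$ and $E_{ij} \neq 0$. For a permutation $\pi$, the conjugate $P_\pi^{-1} A P_\pi$ has $(i,j)$-entry $a_{\pi(i), \pi(j)} \in K[X_{\pi(j)}]$, so it stays in separated variables with all-$1$ diagonal. Its upper triangularity is the statement that every edge $k \to l$ of $G$ satisfies $\pi^{-1}(k) < \pi^{-1}(l)$, i.e.\ $\pi^{-1}$ is a topological ordering of $G$. Such an ordering exists iff $G$ is a directed acyclic graph, so the theorem reduces to showing that $G$ has no directed cycle.

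I would prove acyclicity by induction on $n$, the base $n=1$ being trivial. Assume the theorem for all sizes strictly less than $n$, and suppose for contradiction that $G$ has a cycle; choose a shortest one $C : k_1 \to k_2 \to \cdots \to k_r \to k_1$. If $r < n$, I specialize $X_j = 0$ for every $j \notin \{k_1, \ldots, k_r\}$: each such column collapses to a standard basis vector, so after moving the $C$-indices to the front, the specialized matrix is block lower triangular with diagonal blocks $A_C$ (the principal submatrix of $A$ on $C$) and $I_{n-r}$. Its determinant therefore equals $|A_C|$, and also equals the specialization of $|A|=1$, namely $1$. Hence $A_C$ is itself a matrix in separated variables with $A_C(0) = I_r$ and $|A_C|=1$, to which the inductive hypothesis applies --- forcing the graph of $A_C$, which contains $C$, to be acyclic, a contradiction.

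The case $r = n$ I expect to be the main obstacle, since the induction no longer shrinks the matrix, and one must instead exploit the minimality of $C$ together with the integral-domain property of $K[X]$. After relabeling, assume $C$ is $1 \to 2 \to \cdots \to n \to 1$. Any extra edge $i \to j$ with $j \not\equiv i+1 \pmod{n}$ would combine with the arc of $C$ from $j$ back to $i$ to form a cycle of length $1 + ((i-j)\bmod n) < n$, contradicting the minimality of $C$. Hence the only nonzero off-diagonal entries of $A$ are the $n$ cycle edges, and a short check shows the only permutations contributing a nonzero term to $|A|$ are the identity and the full cyclic shift. Consequently
$$|A| = 1 + (-1)^{n-1}\, E_{1,2}(X_2)\, E_{2,3}(X_3) \cdots E_{n-1,n}(X_n)\, E_{n,1}(X_1),$$
and $|A|=1$ forces this product of nonzero single-variable polynomials in distinct variables to vanish in the integral domain $K[X]$, which is impossible. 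Together with the diagonal normalization, this closes the induction and proves the theorem.
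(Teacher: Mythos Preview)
Your proof is correct. The underlying ideas --- substituting $X_j=0$ to isolate principal submatrices, and the Hamiltonian-cycle endgame where only the identity and the full cyclic shift contribute to the determinant --- coincide with those in the paper, but the organization is genuinely different. The paper factors the argument into two steps: it first shows, by the same substitution trick, that \emph{all} principal minors of $A$ equal $1$, and then invokes a separate lemma (valid over an arbitrary domain $R$) stating that any $A\in\GL_n(R)$ with all principal minors equal to $1$ is, up to permutation, unitriangular. Your argument merges these into a single induction framed in terms of the directed graph $G$ of nonzero off-diagonal entries: a cycle of length $r<n$ is killed by passing to the $r\times r$ principal submatrix $A_C$ (whose determinant the substitution shows to be $1$, so the inductive hypothesis applies), and the length-$n$ case is handled exactly as in the paper's lemma. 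The paper's decomposition buys an independently reusable statement over general domains; your route is shorter and more direct for the theorem at hand, and avoids the detour through ``all principal minors equal $1$''.
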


\begin{proof}
By Lemma~\ref{prinmin1}, we are done if we can prove that all
principal minors of $A$ are equal to 1. First, note that $|A|\in
K^*$ and $|A(0)|=1$ together imply that $|A|=1$. For all $1\leq
j\leq n$, let $A_j$ be the matrix obtained from $A$ by deleting its
$j$th row and column. Note that $A_j\in\M_{n-1}(K[\hat{X}_j])$ is a 
matrix in seperated variables satisfying $A_j(0)=I_{n-1}$. 
Moreover, expanding the determinant of $A$ along its $j$th column and 
substituting $X_j=0$, we obtain $1=|A_{|_{X_j=0}}|=a_{jj}(0)\cdot|A_j|=|A_j|$ 
($A(0)=I_n$, so $a_{ij}(0)=0$ whenever $i\neq j$).

From all this we may conclude that for every $A\in\GL_n(K[X])$ in separated variables 
satisfying $A(0)=I_n$, we have $|A|=1$, each $A_j$ is a matrix in 
$\GL_{n-1}(K[\hat{X}_j])$ in separated variables and $A_j(0)=I_{n-1}$. 
Induction now proves that for every matrix $A\in\GL_n(K[X])$ in separated variables 
satisfying $A(0)=I_n$, all principal minors are equal to 1.
\end{proof}\

\begin{rem}
The proof of the above theorem in particular implies that all
diagonal elements of $A$ (being principal minors) are equal
to 1. But this can also be proved directly. Namely, since
$A(0)=I_n$, each non-diagonal entry $a_{ij}$ satisfies $X_j\mid
a_{ij}$. The fact that $A\in\GL_n(K[X])$ implies that
$(a_{i1}(X_1),\ldots\!,a_{in}(X_n))=(1)$ in $K[X]$ for all $i$.
Substituting $X_j=0$ for all $j\neq i$, we obtain $a_{ii}\in K^*$.
But $A(0)=I_n$, whence $a_{11}=\cdots=a_{nn}=1$.
\end{rem}\

\noindent Additionally, Theorem~\ref{unitrMat} partly solves the
Jacobian Conjecture:

\begin{cor}\label{JCWMT}
The Jacobian Conjecture is satisfied for polynomial maps without
mixed terms.
\end{cor}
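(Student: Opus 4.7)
The plan is to reduce to Theorem~\ref{unitrMat} applied to the Jacobian, exploiting that the Jacobian of a polynomial map without mixed terms is automatically a matrix in separated variables. Indeed, if $f=(f_1,\ldots\!,f_n)$ is without mixed terms, writing $f_i=\sum_{j=1}^n f_i^{(j)}(X_j)$ gives $\partial f_i/\partial X_j=(f_i^{(j)})'(X_j)\in K[X_j]$, as required.

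Assume now $\Char(K)=0$ and $\J\!f\in\GL_n(K[X])$, so $|\J\!f|\in K^*$. Evaluating at $0$ shows $L:=\J\!f(0)\in\GL_n(K)$, and since $f$ has no mixed terms, $L$ is precisely the matrix of its linear part. Replace $f$ by $\tilde f:=L^{-1}f$; this preserves the no-mixed-terms property (each component of $\tilde f$ is a $K$-linear combination of the $f_i$), produces a map with identity linear part, and yields $\J\tilde f=L^{-1}\J\!f\in\GL_n(K[X])$ with $\J\tilde f(0)=I_n$ and entries still of separated-variable type.

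Theorem~\ref{unitrMat} then provides a permutation matrix $P$ such that $P^{-1}(\J\tilde f)P$ is upper unitriangular. Let $\pi$ be the corresponding permutation of variables; then $g:=\pi^{-1}\tilde f\pi$ is again without mixed terms with identity linear part, and its Jacobian $\J g(X)=P^{-1}\J\tilde f(PX)P$ is upper unitriangular as well (substituting $PX$ for $X$ preserves that property).

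The final step is to convert ``upper unitriangular Jacobian'' into ``upper unitriangular map'', which is where characteristic zero genuinely enters. Writing $g_i=\sum_j g_i^{(j)}(X_j)$, the relation $\partial g_i/\partial X_j=(g_i^{(j)})'(X_j)=0$ for $i>j$ together with $\Char(K)=0$ forces each $g_i^{(j)}$ with $j<i$ to be a constant; the diagonal condition $\partial g_i/\partial X_i=1$ then gives $g_i^{(i)}(X_i)=X_i+b_i$. Hence $g_i=X_i+\sum_{j>i}g_i^{(j)}(X_j)+c_i$ for constants $c_i\in K$, so $g\in\BA_n^{\scriptscriptstyle(1)}(K)\subseteq\GA_n(K)$; undoing the conjugation by $\pi$ and the left multiplication by $L$ shows $f\in\GA_n(K)$. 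The only delicate point is the implication $(g_i^{(j)})'=0\Rightarrow g_i^{(j)}\in K$, which truly requires $\Char(K)=0$; this is exactly why the analogous statement in positive characteristic (the second half of Theorem~\ref{unitrMap}) must allow an extra term from $K[X_1^{{}^p}]+\cdots+K[X_n^{{}^p}]$.
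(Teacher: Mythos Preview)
Your proof is correct and follows essentially the same route as the paper: reduce to identity linear part by left-composing with $L^{-1}$, apply Theorem~\ref{unitrMat} to the Jacobian (which is a matrix in separated variables), and then use $\Char(K)=0$ to pass from ``unitriangular Jacobian'' to ``unitriangular map''. The paper's proof is terser---it invokes Theorem~\ref{unitrMat} and concludes directly that $f$ is unitriangular after a permutation, implicitly relying on Theorem~\ref{unitrMap} for the last step---whereas you spell out that passage explicitly, but the argument is the same.
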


\begin{proof}
 If $f$ is a polynomial map without mixed terms satisfying $|\J f|\in K^*$, 
then also $|\J f(0)|\in K^*$, {\it i.e.} $f$ has invertible linear part. 
Composing $f$ on the left with the inverse of its linear part, 
we may assume that $\J f(0)=I_n$. According to Theorem~\ref{unitrMat}, 
this means that $f$ is unitriangular after a permutation of the variables. 
\end{proof}\

\begin{lemma}\label{prinmin1}
Let $R$ be a domain. Suppose $A=(a_{ij})\in\GL_n(R)$ has the
property that all its principal minors are equal to 1. Then $A$ is 
(after conjugation by a permutation matrix) unitriangular. 
\end{lemma}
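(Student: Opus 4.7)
The plan is to leverage the principal minor hypothesis step by step. From the $1\times1$ minors I get $a_{ii}=1$ for all $i$; from the $2\times2$ minor on $\{i,j\}$, which expands to $1-a_{ij}a_{ji}$, I get $a_{ij}a_{ji}=0$, so in the domain $R$ at most one of $a_{ij},a_{ji}$ is nonzero for every pair $i\ne j$. I encode this combinatorial data in a directed graph $G$ on $\{1,\ldots,n\}$ with an edge $i\to j$ (for $i\ne j$) precisely when $a_{ij}\ne 0$. Assuming $G$ is acyclic, a topological sort yields a permutation $\sigma\in S_n$ such that every edge $i\to j$ of $G$ satisfies $\sigma^{-1}(i)<\sigma^{-1}(j)$. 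If $P$ is the associated permutation matrix, then $(P^{-1}AP)_{i'j'}=a_{\sigma(i')\sigma(j')}$ is zero whenever $i'>j'$ and equals $1$ on the diagonal, which is exactly the desired unitriangular form.

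The main obstacle is proving $G$ acyclic, and I would handle it by contradiction. Fix a cycle $v_1\to v_2\to\cdots\to v_k\to v_1$ in $G$ of minimum length $k$; the $2\times 2$ step already forbids $k=2$, so $k\ge3$. Minimality then forces the principal $k\times k$ submatrix $B$ of $A$ on $\{v_1,\ldots,v_k\}$ to be extremely sparse: for any cycle-adjacent pair the reverse entry is zero by the $2\times 2$ condition (since the forward entry $b_i:=a_{v_iv_{i+1\bmod k}}$ is nonzero), while for any non-cycle-adjacent pair $\{v_i,v_j\}$, an edge in either direction in $G$ would combine with a piece of the original cycle to produce a strictly shorter cycle, contradicting minimality. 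Thus $B$ has $1$'s on the diagonal, the nonzero entries $b_1,\ldots,b_{k-1}$ on the superdiagonal together with $b_k$ at position $(k,1)$, and zeros elsewhere.

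A routine expansion along the first column of this sparse matrix then gives $\det B = 1+(-1)^{k+1}b_1b_2\cdots b_k$. But $\det B$ is a principal minor of $A$, hence equal to $1$, so $b_1\cdots b_k=0$. Since every $b_i$ is nonzero and $R$ is a domain, this is the desired contradiction. Hence $G$ is acyclic, and the construction of $\sigma$ from the previous paragraph finishes the proof. The hard part is really the passage from ``minimum cycle'' to the sparse shape of $B$: the point is that minimality, together with the $2\times 2$ analysis, kills every off-cycle position, after which the determinant reduces to a monomial whose vanishing contradicts the domain hypothesis.
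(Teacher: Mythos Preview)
Your proof is correct. The core algebraic step is the same as the paper's: a ``cyclic'' principal submatrix has determinant $1+(-1)^{k+1}b_1\cdots b_k$, and setting this equal to $1$ kills the product, contradicting the domain hypothesis. The organization, however, is genuinely different.

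The paper argues by induction on $n$. It shows that if $A$ has an \emph{elementary column} (a column whose only nonzero entry is the diagonal $1$), one can peel it off and apply the induction hypothesis to the remaining $(n-1)\times(n-1)$ block. If $A$ has no elementary column, the induction hypothesis applied to each $A_i$ (the matrix with row and column $i$ deleted) forces every column to be ``almost elementary'', which produces a permutation $\sigma$ recording the unique off-diagonal nonzero entry in each column. After a further conjugation making $A_n$ unitriangular, $\sigma$ is pinned down as the full $n$-cycle, and the determinant of $A$ itself yields the contradiction.

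Your route bypasses the induction entirely: you encode the nonzero pattern as a digraph, and a minimum-length cycle immediately isolates a principal submatrix of the required cyclic shape (minimality kills the chords, the $2\times2$ minors kill the reverse edges). This is more direct and conceptually cleaner; it also works verbatim over the domain $R$, whereas the paper first passes to the fraction field. The paper's approach, on the other hand, makes the inductive structure explicit and yields the side observation that some column of $A$ must already be elementary, which your argument does not produce (nor need).
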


\begin{proof}
We may assume that $R$ is a field. Note that if all principal minors of 
a matrix equal 1, then any principal submatrix also has this property. 
Further, a column of a square matrix is called an {\it elementary column} 
if its diagonal entry equals 1 and all its remaining entries are 0.
Note that the property of having an elementary column is invariant \
under conjugation by a permutation matrix. 
(Partly due to the fact that conjugation by a permutation
matrix permutes the diagonal elements.) 

We will prove the theorem by induction on $n$. It is trivial for
$n=1$. If $n=2$ then $|A|=1$ implies $a_{12}a_{21}=0$, which also
settles this case (as $R$ is a field). So we will assume from now on
that $n\geq3$ and that the statement holds in lower dimensions.
For all $1\leq i\leq n$, let $A_i$ be the matrix obtained from $A$
by deleting its $i$th row and column. Note that we may apply the
induction hypothesis to $A_i$. 

We are done if $A$ contains an elementary column: 
if this is the case, we may (after permutation) assume that 
the first column is elementary, and then apply the induction 
hypothesis to $A_1$ to obtain (after permutation) a unitriangular matrix. 

Now we assume that $A$ doesn't have an elementary column,
and aim to arrive at a contradiction. Take $i\in\{1,\ldots\!,n\}$. 
By the induction hypothesis, $A_i$ contains an elementary column. 
So there is a $j\neq i$ such that the $j$th column of $A$ is ``almost elementary'', 
{\it i.e.}  $a_{jj}=1$ and $a_{kj}=0$ for $k\notin\{i,j\}$. 
And $a_{ij}\neq0$, as $A$ has no elementary column. Associating a $j$ 
to each $i$ in this way, we obtain a map $\sigma$ from $\{1,\ldots\!,n\}$ to itself. 
$\sigma$ is obviously injective, and thus a permutation. Hence, 
$a_{ij}=0$ for all $i$ and $j$ with $j\notin\{i,\sigma(i)\}$ 
(and $a_{ii}=1$ for all $i$). 

Using the induction hypothesis on $A_n$ again, we may assume 
(after conjugation by a permutation matrix) that $A_n$ is unitriangular. 
Hence, $\sigma(i)>i$ for all $i<n$. But then we must have $\sigma(n)=1$ 
and $\sigma(i)=i+1$ for all $i<n$. Hence, expanding the determinant of $A$ 
along the $n$th row we obtain $0=|A|-1=a_{1\sigma(1)}\cdots a_{n\sigma(n)}$, 
which contradicts the fact that all $a_{i\sigma(i)}$ are nonzero.
\end{proof}\

\begin{rem}
For a domain $R$ and any $A'\in\M_n(R)$, Corollary~6.3.9 in \cite{Essen}
gives a result which is very similar to Lemma~\ref{prinmin1}. It says that
if every principal minor of $A'$ is equal to 0, then $A'$ can be conjugated 
by a permutation matrix such that the resulting matrix is an upper
triangular matrix with zero diagonal. This result and Lemma~\ref{prinmin1}
are actually easily shown to be equivalent!

Namely, we can use the well-known fact that the coefficient of $X^{n-k}$ in the
characteristic polynomial $P_{\boldsymbol{\cdot}} (X)$ of an $n\times n$-matrix equals $(-1)^k$ times the sum
of all principal $k$-minors. So suppose $A\in\GL_n(R)$ is such that all its
principal minors are equal to 1. Then any principal submatrix $A'_0$ of $A':=A-I_n$
is of the form $A'_0=A_0-I$, where $I$ is the identity matrix of the corresponding size,
and $A_0$ is the principal submatrix of $A$ consisting of the corresponding rows and
columns. Let $m$ be the number of rows (or columns) of $A_0$.
Since all principal minors of $A_0$ are equal to 1, and for each $k$ there are
$\binom{m}{k}$ principal $k$-minors, $P_{A_0}(X)=X^m-mX^{m-1}+\binom{m}{2}X^{m-2}-\cdots+(-1)^m=(X-1)^m$.
But then $P_{A'_0}(X)=|XI_m-A'_0|=|(X+1)I_m-A_0|=P_{A_0}(X+1)=X^m$. So $A'_0$ is
nilpotent, and in particular $|A'_0|=0$. Now that every principal minor of $A'$
is equal to 0, the result in \cite{Essen} gives a permutation matrix $B$ such that
$B^{-1}AB=B^{-1}A'B+I_n$ is upper unitriangular. Similarly, we can obtain the result in
\cite{Essen} from our Lemma~\ref{prinmin1}.
\end{rem}\

\noindent Now we consider the remaining problems and conjectures presented in 
the Introduction. First, the Tame Generators Problem: an immediate consequence of 
Theorem~\ref{unitrMap}. (Triangular automorphisms are obviously tame.)\\

\begin{cor}
 Over a characteristic zero field, all invertible polynomial maps without mixed terms are tame.
\end{cor}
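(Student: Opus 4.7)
The plan is to bootstrap directly off Theorem~\ref{unitrMap}: first normalize the linear part of $f$ to the identity without leaving the class of maps without mixed terms, then apply the theorem, and finally observe that everything produced is visibly tame.

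Let $f=(f_1,\ldots,f_n)\in\GA_n(K)$ be without mixed terms. Since $\J f\in\GL_n(K[X])$, evaluating at $0$ gives $\J f(0)\in\GL_n(K)$, so the linear part $L$ of $f$ lies in $\GL_n(K)\subseteq\TA_n(K)$. I would then set $\tilde f:=L^{-1}\circ f$. Because $L^{-1}$ acts componentwise as a $K$-linear combination of the $f_i$, and each $f_i$ lies in $K[X_1]+\cdots+K[X_n]$, the map $\tilde f$ is again without mixed terms, and by construction its linear part is the identity. Theorem~\ref{unitrMap} (in the characteristic zero case) now supplies a permutation $\pi$ of the variables such that $\pi^{-1}\tilde f\pi$ is unitriangular.

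Every unitriangular automorphism is a finite composition of elementary automorphisms (one per coordinate, applied in decreasing order of index), hence lies in $\EA_n(K)\subseteq\TA_n(K)$. Since permutation automorphisms are linear, $\pi\in\GL_n(K)\subseteq\TA_n(K)$, and therefore $\tilde f=\pi\bigl(\pi^{-1}\tilde f\pi\bigr)\pi^{-1}\in\TA_n(K)$. Composing on the left with $L\in\TA_n(K)$ then gives $f=L\circ\tilde f\in\TA_n(K)$. There is no substantial obstacle here; the only point worth flagging is that one must compose with $L^{-1}$ on the \emph{left} rather than the right, as substituting a linear combination of the $X_j$ into a univariate summand of some $f_i$ would in general introduce genuine mixed monomials, destroying the hypothesis of Theorem~\ref{unitrMap}.
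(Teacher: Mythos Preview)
Your argument is correct and follows exactly the route the paper takes: the paper remarks just before the corollary that one composes on the left with the inverse of the linear part to land in the hypotheses of Theorem~\ref{unitrMap}, then notes that triangular automorphisms are obviously tame. Your write-up simply spells this out in full (including the useful observation about why the normalization must be on the left), so there is nothing to add.
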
\

\noindent Also, we can use Theorem~\ref{unitrMap} to partly solve the Linearization Conjecture 
(Corollary~\ref{linconnonmixed}). 
It is unknown to the author whether this conjecture also holds for the most general form 
of an invertible polynomial map without mixed terms.\\

\begin{lemma}
Let $f=(aX_1+p,g)\in\GA_n(K)$, where $a\in K^*$, $p\in K[X_2,\ldots\!,X_n]$ and 
$g\in\GA_{n-1}(K)$ (in the variables $X_2,\ldots\!,X_n$). 
Suppose $f$ has finite order. Then $h^{-1}fh=(aX_1,g)$ for some $h\in\EA_n(K)$ 
with $h(X_i)=X_i$ for $i\geq2$.

In particular, the Linearization Conjecture holds for triangular maps.
\end{lemma}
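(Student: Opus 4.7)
The plan is to exhibit $h$ explicitly. Any $h \in \EA_n(K)$ fixing $X_2, \ldots, X_n$ must take the form $h = (X_1 + q, X_2, \ldots, X_n)$ for some $q \in K[X_2, \ldots, X_n]$, so $h^{-1} = (X_1 - q, X_2, \ldots, X_n)$. A direct composition gives
$$
h^{-1} f h = (aX_1 + aq + p - q(g),\ g),
$$
and the problem reduces to producing $q \in K[X_2, \ldots, X_n]$ satisfying $q(g) - aq = p$. Writing $G$ for the $K$-algebra automorphism of $R := K[X_2, \ldots, X_n]$ induced by $g$, this is the linear equation $(G - a)(q) = p$ inside the $K$-vector space $R$.

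Next I would extract the constraints imposed by $f^d = \id$, where $d$ is the order of $f$. A straightforward induction yields $f^k(X_1) = a^k X_1 + \sum_{i=0}^{k-1} a^{k-1-i} G^i(p)$, while $f^k$ restricts to $g^k$ on $K[X_2, \ldots, X_n]$. Setting $k = d$ forces $a^d = 1$, $G^d = \id$, and $s(G)(p) = 0$, where
$$
s(X) := X^{d-1} + aX^{d-2} + \cdots + a^{d-1}
$$
satisfies $(X - a)\, s(X) = X^d - a^d = X^d - 1$.

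The key step is the construction of $q$ from these data. In characteristic zero (or whenever $\Char(K) \nmid d$), the polynomial $X^d - 1$ is separable, so $X - a$ and $s(X)$ are coprime, and Bezout produces $u, v \in K[X]$ with $u(X)(X - a) + v(X) s(X) = 1$. Substituting the operator $G$ and applying to $p$ gives
$$
p = u(G)(G - a)(p) + v(G) s(G)(p) = (G - a)\bigl(u(G)(p)\bigr),
$$
using $s(G)(p) = 0$ together with the commutativity of polynomials in $G$. Hence $q := u(G)(p) \in R$ works. The main obstacle I anticipate is precisely this separability hypothesis: in the characteristic-zero setting relevant to the Linearization Conjecture it is automatic, whereas a positive-characteristic extension would require either invertibility of $d$ in $K$ or a finer argument on generalized eigenspaces.

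For the \emph{in particular} clause I would induct on $n$. A triangular automorphism of finite order has the form $(aX_1 + p, g)$ with $g \in \BA_{n-1}(K)$ of finite order, so by the inductive hypothesis $g$ is conjugate in $\GA_{n-1}(K)$ to a linear map $L$; lifting that conjugation by an element of $\GA_n(K)$ fixing $X_1$ puts $f$ into the form treated by the main statement, which then replaces it by $(aX_1, L)$, visibly linear. The base case $n = 1$ is the classical observation that a finite-order affine map $aX_1 + b$ in characteristic zero must have $a \ne 1$ (else $f^k(X_1) = X_1 + kb$ never returns to the identity for $b \ne 0$), and translating its unique fixed point to the origin linearizes it.
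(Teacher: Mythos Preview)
Your proof is correct and follows essentially the same path as the paper's: reduce to solving $(G-a)(q)=p$, and exploit the constraint coming from $f^d=\id$ (your $s(G)(p)=0$ is exactly the paper's $p_d=0$, where $p_k=\sum_{i=0}^{k-1}a^{k-1-i}G^i(p)$). The only real difference is in how $q$ is produced: the paper writes down the explicit averaging formula $q=\frac{1}{d}\sum_{k=1}^{d-1}a^{-k}p_k$ and checks it by a telescoping computation, whereas you invoke the coprimality of $X-a$ and $s(X)$ and a B\'ezout identity. Both arguments rest on the same hypothesis (invertibility of $d$ in $K$), and your B\'ezout $u(G)(p)$ is just a less explicit member of the same family of solutions. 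Your inductive treatment of the ``in particular'' clause is a spelled-out version of the paper's one-line ``repeatedly applying the first statement''; the paper peels off variables in the opposite order (kill $p$ first, then recurse on $g$), but the two are interchangeable.
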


\begin{proof}
The second statement follows by repeatedly applying the first one to a 
given triangular map. So let $f=(aX_1+p,g)$ be as described, and suppose 
it has finite order $d\geq1$. One readily verifies that for all $k\geq1$, 
$f^k$ has the form $(a^kX_1+p_k,g^k)$, where $p_k\in K[X_2,\ldots\!,X_n]$ (and $p_d=0$). 
From $f^{k+1}=f^k\circ f$ we get that $p_{k+1}=p_k(g)+a^kp$ for all $k$. 

Now let $q:=\sum_{k=1}^{d-1}\frac{1}{da^k}p_k$, and $h:=(X_1-q,X_2,\ldots\!,X_n)$. 
Then $h^{-1}fh=(aX_1,g)$ if and only if $-aq+p+q(g)=0$. The latter follows from 
the fact that $q(g)$ equals
$$
\sum_{k=1}^{d-1}\tfrac{1}{da^k}p_k(g)\ =\ \sum_{k=1}^{d-1}\tfrac{1}{da^k}(p_{k+1}-a^kp)\ =\ 
\sum_{m=2}^d\tfrac{1}{da^{m-1}}p_m-\tfrac{d-1}{d}p\ =\ aq-p
$$
using $p_1=p$ and $p_d=0$.
\end{proof}\

\begin{cor}\label{linconnonmixed}
Let $f$ be a polynomial map without mixed terms over a characteristic zero field, 
and suppose the matrix of its linear part is diagonal. 
Then the Linearization Conjecture holds for $f$.  
\end{cor}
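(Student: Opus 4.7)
The plan is to reduce the corollary to the preceding lemma, which has already established the Linearization Conjecture for triangular automorphisms. So I assume $f\in\GA_n(K)$ has finite order (otherwise there is nothing to prove), is without mixed terms, and has linear part whose matrix $L\in\GL_n(K)$ is diagonal.

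First I would form $F:=L^{-1}\circ f$. Because $L^{-1}$ is diagonal, the composition $F$ is obtained from $f$ by scaling each component $f_i$ by a single scalar; in particular $F$ is still without mixed terms, and its linear part is the identity. Theorem~\ref{unitrMap} then furnishes a permutation $\pi$ of the variables such that $\pi^{-1}F\pi\in\BA_n^{\scriptscriptstyle(1)}(K)$. Combining this with the factorisation
\[
\pi^{-1}f\pi\;=\;(\pi^{-1}L\pi)\circ(\pi^{-1}F\pi),
\]
and noting that $\pi^{-1}L\pi$ is again a diagonal linear map (the diagonal entries of $L$ being merely permuted), I reduce the problem to observing that the composite of a diagonal linear map with a unitriangular automorphism is triangular. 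This is a direct computation: if the unitriangular factor has $i$th component $X_i+p_i$ with $p_i\in K[X_{i+1},\ldots,X_n]$ and the diagonal factor has $i$th component $b_iX_i$, then the composite has $i$th component $b_iX_i+b_ip_i$, which has exactly the form required for membership in $\BA_n(K)$.

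At this stage $\pi^{-1}f\pi$ is a triangular automorphism of the same finite order as $f$, and the preceding lemma directly furnishes an $h\in\GA_n(K)$ conjugating $\pi^{-1}f\pi$ to a linear map. Hence $\pi h$ conjugates $f$ itself to a linear map, proving the corollary. No step strikes me as a genuine obstacle; the entire argument is a bookkeeping reduction, and the only point demanding even slight care is checking that conjugation by the permutation supplied by Theorem~\ref{unitrMap} does not spoil the diagonal structure of $L$, which it cannot because the conjugate of a diagonal matrix by a permutation matrix is again diagonal.
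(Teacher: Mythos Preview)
Your proposal is correct and follows exactly the route the paper intends: its one-line proof (``By Theorem~\ref{unitrMap}, we may assume that $f$ is triangular'') is precisely the reduction you spell out, namely factoring off the diagonal linear part, applying Theorem~\ref{unitrMap} to the resulting map with identity linear part, and observing that conjugating the diagonal factor by the permutation keeps it diagonal so that the product lands in $\BA_n(K)$. You have simply made explicit what the paper leaves implicit.
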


\begin{proof}
 By Theorem~\ref{unitrMap}, we may assume that $f$ is triangular.
\end{proof}\

\noindent The next one (the Coordinate Recognition problem) is easy.\\

\begin{prop}\label{coordrecognnonmixed}
Let $\Char(K)=0$ and $f\in K[X]$ a polynomial without mixed terms, say $f=f_1+\cdots+f_n$ 
with $f_i\in K[X_i]$ for all $i$. Then $f$ is a coordinate iff at least one of the $f_i$ 
has degree 1. 
\end{prop}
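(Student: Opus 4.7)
The plan is to prove the two implications separately, with the forward direction being an elementary construction and the reverse direction relying on the standard fact that the partial derivatives of a coordinate generate the unit ideal, combined with the Nullstellensatz.

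For the ``if'' direction, suppose some $f_i$ has degree 1; after permuting variables I may assume $f_1 = aX_1 + b$ with $a \in K^*$. Then the map
\[
(f, X_2, \ldots, X_n) = \bigl(aX_1 + b + f_2(X_2) + \cdots + f_n(X_n), X_2, \ldots, X_n\bigr)
\]
is a triangular (indeed elementary up to a linear factor) automorphism of $K[X]$, with explicit inverse obtained by solving for $X_1$. Hence $f$ is a coordinate.

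For the ``only if'' direction, I would rely on the following well-known consequence of the chain rule: if $f$ is a coordinate then there exist $a_1, \ldots, a_n \in K[X]$ with $\sum_i a_i \cdot \partial f/\partial X_i = 1$. Indeed, if $f = f_1$ is part of an automorphism $(f, g_2, \ldots, g_n) \in \GA_n(K)$, then $|\J(f, g_2, \ldots, g_n)| \in K^*$, and expanding this determinant along its first row yields the claimed identity (the $a_i$ being, up to a nonzero scalar, cofactors). So the ideal $(\partial f/\partial X_1, \ldots, \partial f/\partial X_n)$ equals $K[X]$.

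Now, since $f$ is without mixed terms, $\partial f/\partial X_i = f_i'(X_i)$. Suppose, toward a contradiction, that no $f_i$ has degree $1$. In characteristic zero this means each $f_i'(X_i)$ is either the zero polynomial (if $\deg f_i \le 0$) or a nonconstant univariate polynomial (if $\deg f_i \ge 2$). In either case, $f_i'$ has a root $\alpha_i \in \overline{K}$. Then $(\alpha_1, \ldots, \alpha_n) \in \overline{K}^n$ is a common zero of $f_1'(X_1), \ldots, f_n'(X_n)$, so by the Nullstellensatz the ideal they generate is a proper ideal of $K[X]$, contradicting the previous paragraph. Hence some $f_i$ has degree $1$.

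The only non-routine step is the derivative-ideal fact, but this is a textbook consequence of the invertibility of the Jacobian, so I do not anticipate a genuine obstacle; the heart of the argument is the Nullstellensatz observation, which uses characteristic zero precisely to ensure $\deg f_i' = \deg f_i - 1$ whenever $f_i$ is nonconstant.
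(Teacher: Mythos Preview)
Your proof is correct and follows essentially the same approach as the paper's: both use that the partial derivatives of a coordinate generate the unit ideal (from the invertibility of the Jacobian), then observe that since $\partial f/\partial X_i = f_i'(X_i)$ depends only on $X_i$, the Nullstellensatz forces some $f_i'$ to be a nonzero constant. Your write-up is in fact more explicit than the paper's on the ``if'' direction, where the paper simply remarks that the derivative condition is ``also sufficient'' without spelling out the triangular automorphism.
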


\begin{proof}
A necessary condition for any polynomial in $K[X]$ to be a coordinate, 
is that the ideal of its partial derivatives is the unit ideal in $K[X]$ 
(as these partial derivatives form the first row of an invertible Jacobian matrix). 
In this case this condition is also sufficient, since it is here 
equivalent to saying that at least one of these partial derivatives is a nonzero constant 
(the partial derivatives cannot have a common zero in an algebraic closure of $K$). 
\end{proof}\

\noindent Unfortunately, the Polynomial Ring Recognition Problem (say for a 
finitely generated $K$-algebra $A=K[X]/I$, $I$ an ideal) is still unsolved 
if $\Char(K)=0$ and $A$ is at least three-generated over $K$, even if $I$ is generated 
by polynomials without mixed terms. In particular, we can finish this paper 
with the following question.\\

\begin{question}
 Do polynomials without mixed terms satisfy the Abhyankar-Sathaye Conjecture?
\end{question}\

\section*{Acknowledgement}\

\noindent The author is very grateful to Arno van den Essen and Stefan Maubach 
for useful discussions and comments.\\

\vspace{1cm}

\doublespacing

\begin{center}

\end{center}


\begin{thebibliography}{lengte........}
\vspace{0.3cm}
\bibitem{AM}
S. Abhyankar and T. Moh, Embeddings of the line in the plane, J.
Reine Angew. Math. 276 (1975) 148-166
\bibitem{Adjam}
K. Adjamagbo, On separable algebras over a U.F.D. and the Jacobian Conjecture in any characteristic, 
in Automorphisms of affine spaces (Cura\c{c}ao, 1994), 89-103, Kluwer Acad. Publ., Dordrecht, 1995
\bibitem{Baker}
R. Baker, J. Dover, G. Ebert, K. Wantz, Perfect Baer subplane partitions and three-dimensional flag-transitive planes,
Des. Codes Cryptogr. 21 (2000), No. 1-3, 19-39
\bibitem{BCW}
H. Bass, E. Connell, D. Wright, The Jacobian conjecture, reduction
of degree and formal expansion of the inverse, Bull. Amer. Math.
Soc. 7 (1982) 287-330
\bibitem{Berlekamp}
E. Berlekamp, Algebraic coding theory, McGraw-Hill, New York, 1968
\bibitem{Borisov}
A. Borisov and M. Sapir, Polynomial maps over finite fields and
residual finiteness of mapping tori of group endomorphisms, Invent. Math. 160 (2005), No. 2, 341-356
\bibitem{CK}
J. Ch\c{a}dzy\'{n}ski and T. Krasi\'{n}ski, On the Lojasiewicz
exponent at infinity for polynomial mappings of $\C^2$ into $\C^2$
and components of polynomial automorphisms of $\C^2$, Ann. Polon.
Math. 57 (3) (1992) 291-302
\bibitem{DYu09}
V. Drensky and J.-T. Yu, Automorphisms of polynomial algebras and Dirichlet series,  J. Algebra 321 (2009), no. 1, 292-302
\bibitem{lnd3}
A. van den Essen, Locally nilpotent derivations and their
applications III, J. Pure Appl. Algebra 98 (1993), 15-23
\bibitem{Essen}
A. van den Essen, Polynomial automorphisms and the Jacobian
Conjecture, Progr. Math. Vol. 190, Birkh\"auser,
Basel-Boston-Berlin, 2000
\bibitem{E-R}
A. van den Essen and P. van Rossum, Triangular derivations related
to problems on affine $n$-space, Proc. Amer. Math. Soc. 130 (5)
(2001) 1311-1322
\bibitem{Jamison}
R. Jamison, Covering finite fields with cosets of subspaces,
J. Comb. Theory Ser. A 22 (1977), No. 3, 253-266
\bibitem{Jung}
H. Jung, \"{U}ber ganze birationale Transformationen der Ebene, J.
Reine Angew. Math. 184 (1942) 161-174
\bibitem{Keller}
O. Keller, Ganze Gremona-transformation, Monats. Math. Physik 47
(1939), 299-306
\bibitem{Kraft}
H. Kraft, Challenging problems on affine $n$-space, S\'eminaire
Bourbaki, Vol. 1994/95, Ast\'erisque No. 237 (1996), Exp. No. 802,
5, 295-317
\bibitem{vdK}
W. van der Kulk, On polynomial rings in two variables, Nieuw Arch.
Wiskd. 3 (1) (1953) 33-41
\bibitem{Lidl}
R. Lidl and H. Niederreiter, Introduction to finite fields and their applications,
revision of the 1986 first edition, Cambridge University Press, Cambridge, 1994
\bibitem{MWSloane}
F. MacWilliams and N. Sloane, The theory of error-correcting codes,
North-Holland Mathematical Library, Vol. 16, North-Holland Publishing Co., 1977
\bibitem{Mau01}
Maubach, Stefan, Polynomial automorphisms over finite fields, Serdica Math. J. 27 (2001), No. 4, 343-350
\bibitem{Nagata}
M. Nagata, A theorem of Gutwirth, J. Math. Kyoto Univ. 11 (1971),
149-154
\bibitem{Nag}
M. Nagata, On automorphism group of {$k[x,\,y]$}, Department of
Mathematics, Kyoto University, Lectures in Mathematics, No. 5,
Kinokuniya Book-Store Co., Ltd., Tokyo, 1972
\bibitem{Nous}
P. Nousiainen, On the Jacobian Problem in positive characteristic, Pennsylvania State Univ., preprint (1981)
\bibitem{Ore1}
O. Ore, On a special class of polynomials, Trans. Amer. Math. Soc. 35 (1933), No. 3, 559-584
\bibitem{Ore2}
O. Ore, Contributions to the theory of finite fields, Trans. Amer. Math. Soc. 36 (1934), No. 2, 243-274
\bibitem{S-U}
I. Shestakov and U. Umirbaev, The tame and the wild automorphisms of
polynomial rings in three variables, J. Amer. Math. Soc. 17 (1)
(2004) 197-227
\bibitem{Smale}
S. Smale, Mathematical problems for the next century, Math.
Intelligencer 20 (2) (1998) 7-15
\bibitem{Suz}
M. Suzuki, Propri\'et\'es topologiques des polyn\^omes de deux
variables complexes, et automorphismes alg\'ebriques de l'espace
$\C^2$, J. Math. Soc. Japan 26 (3) (1974) 241-257
\end{thebibliography}
\end{document}